\newcommand{\C}{\mathbb{C}}
\newcommand{\R}{\mathbb{R}}
\newcommand{\Q}{\mathbb{Q}}
\newcommand{\N}{\mathbb{N}}
\newcommand{\Z}{\mathbb{Z}}
\newcommand{\A}{\mathbb{A}}
\newcommand{\F}{\mathbb{F}}
\DeclareMathOperator{\mul}{\mathsf{m}}
\DeclareMathOperator{\proj}{proj}
\DeclareMathOperator{\HT}{HT}
\DeclareMathOperator{\WD}{WD}
\DeclareMathOperator{\rec}{rec}
\DeclareMathOperator{\Art}{Art}
\DeclareMathOperator{\Hom}{Hom}
\DeclareMathOperator{\Aut}{Aut}
\DeclareMathOperator{\Ind}{Ind}
\DeclareMathOperator{\Frob}{Frob}
\DeclareMathOperator{\Gal}{Gal}
\DeclareMathOperator{\GL}{GL}
\DeclareMathOperator{\GSp}{GSp}
\DeclareMathOperator{\PGL}{PGL}
\DeclareMathOperator{\SL}{SL}
\DeclareMathOperator{\SU}{SU}
\DeclareMathOperator{\PSL}{PSL}
\DeclareMathOperator{\PGSp}{PGSp}
\DeclareMathOperator{\PSp}{PSp}
\DeclareMathOperator{\Sp}{Sp}
\DeclareMathOperator{\PO}{PO}
\DeclareMathOperator{\SO}{SO}
\DeclareMathOperator{\PSO}{PSO}
\DeclareMathOperator{\PGO}{PGO}
\DeclareMathOperator{\GO}{GO}
\DeclareMathOperator{\POM}{P\Omega}
\DeclareMathOperator{\POMO}{P\Gamma O}
\DeclareMathOperator{\TInd}{\otimes -Ind}
\titleformat{\section}[hang]
{\normalfont\filright\large}{\thesection. }{0pt}
{\upshape\bfseries}
\titleformat{\subsection}[hang]
{\itshape}{\thesubsection \ - }{0pt}
{}
\theoremstyle{plain}
\newtheorem{theo}{Theorem}[section]
\newtheorem{lemm}[theo]{Lemma}
\newtheorem{coro}[theo]{Corollary}
\theoremstyle{remark}
\newtheorem{rema}[theo]{\sc Remark}
\theoremstyle{definition}
\newtheorem{defi}[theo]{Definition}
\title{On the images of the Galois representations attached to certain
RAESDC automorphic representations of $\GL_n(\A_\Q)$}
\author{\small ADRI$\acute{\mbox{A}}$N ZENTENO  \footnote{Facultad de Ciencias, Universidad Nacional Aut\'onoma de M\'exico. Circuito Exterior s/n, Coyoac\'an, Cd. Universitaria, 04510, Mexico. \texttt{matematicazg@ciencias.unam.mx}}}
\date{\today}
\begin{document}

\maketitle

\begin{abstract}
In the 80's Aschbacher classified the maximal subgroups of almost all of the finite almost simple classical groups. Essentially, this classification divide these subgroups into two types. The first of these consist roughly of subgroups that preserve some kind of geometric structure, so they are commonly called subgroups of geometric type. 
In this paper we will prove the existence of infinitely many compatible systems $\{ \rho_\ell \}_\ell$ of $n$-dimensional Galois representations associated to regular algebraic, essentially self-dual, cuspidal automorphic representations of $\GL_n(\A_\Q)$ ($n$ even) such that, for almost all primes $\ell$, the image of $\overline{\rho}_{\ell}$ (the semi-simplification of the reduction of $\rho_\ell$) cannot be contained in a maximal subgroup of geometric type of an $n$-dimensional symplectic or orthogonal group. 
Then, we apply this result to some 12-dimensional representations to give heuristic evidence towards the inverse Galois problem for even-dimensional orthogonal groups.

2010 \textit{Mathematics Subject Classification}. 11F80, 12F12.
\end{abstract}

\section{Introduction}\label{sec:intro}

A strategy to address the inverse Galois problem over $\Q$ consists of studding the image of continuous representations of the absolute Galois group $G_\Q :=\Gal(\overline{\Q}/\Q)$. More precisely, let $\overline{\rho} : G_\Q \rightarrow \GL_n(\F_{\ell^s})$ be a continuous representation.  As $\ker \overline{\rho} \subseteq G_{\Q}$ is an open subgroup, there exists a finite Galois extension $K/\Q$ such that $\ker \overline{\rho} = G_K := \Gal(\overline{K}/K)$. Therefore 
\[
\mbox{Im} \overline{\rho} \simeq G_\Q / \ker \overline{\rho} \simeq G_\Q /G_K \simeq \Gal(K/\Q).
\] 
This reasoning shows that, whenever we are given a Galois representation of $G_\Q$ over a finite field $\F_{\ell^s}$, we obtain a realization of $\mbox{Im} \overline{\rho} \subseteq \GL_n(\F_{\ell^s})$ as a Galois group of $\Q$.

By using this strategy and the notion of good-dihedral representations (or some generalization), introduced by Khare and Wintenberger in their proof of Serre's modularity conjecture \cite{KW}, several cases of the inverse Galois problem for projective symplectic groups and odd-dimensional orthogonal groups have been proved. We refer  the reder to \cite{AdR15} and \cite{wie} for a discussion on previous results towards the inverse Galois problem that have been proved by using this method and its variants.

Following this idea, in this paper we show that there exist compatible systems $\mathcal{R} = \{\rho_\ell \}_\ell$ of $n$-dimensional Galois representations $\rho_\ell:G_\Q \rightarrow \GL_n(\overline{\Q}_\ell)$ ($n$ even) such that the image of $\overline{\rho}_\ell$ (the semi-simplification of the reduction of $\rho_\ell$) cannot be contained in a maximal subgroup of ``geometric type" of an $n$-dimensional symplectic or orthogonal group for almost all $\ell$.

More precisely, we start by introducing the notion of maximally induced representations of $S$-type and $O$-type (see Definiton \ref{minduced}), which are a generalization of the good-dihedral representations of Khare and Wintenberger \cite{KW}. 
With this tool and the classification of the maximal subgroups of the finite almost simple classical groups (due to Aschbacher \cite{As84}) we prove a representation-theoretic result which gives us a set of local conditions needed to construct compatible systems $\mathcal{R} = \{\rho_\ell \}_\ell$ of $n$-dimensional Galois representations such that, for almost all $\ell$, the image of $\overline{\rho}_\ell$ is not contained in a maximal subgroup of geometric type (see Definition \ref{geot}) of an $n$-dimensional symplectic or orthogonal group.

A well known fact in arithmetic geometry is that, given a RAESDC (regular algebraic, essentially self-dual, cuspidal) automorphic representation of $\GL_n(\A_\Q)$, there exist a compatible system associated to it. Then, in order to prove the existence of compatible systems as mentioned above, we prove the existence of RAESDC automorphic representations of $\GL_n(\A_\Q)$ with certain appropriate local types (see Section \ref{sec:7}). To do this we use Arthur's work \cite{Ar12} on endoscopic classification of automorphic representations for symplectic and orthogonal groups combined with some slightly modified results of Shin \cite{Shi12}.

Finally, by using the classification of the maximal almost simple groups contained in the symplectic and orthogonal groups of small dimension (at most $12$) we give a refinement of our main result. More precisely, we prove that if $\rho_\ell$ is a maximally induced representation of $S$-type and $6 \leq n \leq 12$, the image of $\overline{\rho}^{\proj}_\ell$ (the projectivization of $\overline{\rho}$,  i.e., $\overline{\rho}$ composed with the natural projection $\GL (\F_{\ell^s}) \twoheadrightarrow \PGL_n(\F_{\ell^s})$, where $\PGL_n(\F_{\ell^s}) := \GL_n(\F_{\ell^s}) /(\F_{\ell^s}^\times \cdot \mbox{Id})$) is either $\PSp_n(\F_{\ell^s})$ or $\PGSp_n(\F_{\ell^s})$ and if $\rho_\ell$ is a maximally induced representation of $O$-type and $n = 12$, the image of $\overline{\rho}^{\proj}_\ell$ is is either $\POM^+_{12}(\F_{\ell^s})$, $\PSO^+_{12}(\F_{\ell^s})$, $\PO^+_{12}(\F_{\ell^s})$ or $\PGO^+_{12}(\F_{\ell^s})$, for some integer $s>0$. Here, whenever $G$ is a subgroup of a linear group $\GL_n(\F_{\ell^s})$, $\mbox{P}G$ will denote the projectivization of $G$ which is defined as the image of $G$ in $\PGL_n(\F_{\ell^s})$. 

An immediate consequence of this results is that the previous groups occurs as a Galois group over $\Q$ for infinitely many primes $\ell$ and infinitely many positive integers $s$.
To the best of our knowledge, the orthogonal groups mentioned above are not previously known to be Galois over $\Q$, except for $s=1$ which was studied in \cite{Zy14}. The symplectic case was previously studied in \cite{AdRDW}, \cite{ADW14}, \cite{ADSW14} and \cite{KLS}.

\subsection*{Acknowledgments}
This paper is part of my PhD thesis, then I am very grateful to my advisor Luis V. Dieulefait for his constant guidance and encouragement. I also thank to Sara Arias de Reyna for many stimulating conversations about Galois representations and Sug Woo Shin for a useful correspondence about the construction of automorphic representations with prescribed local conditions. 
Part of this work has been written during a stay at the Mathematical Institute of the University of Barcelona. I would like to thank this institution for their support and the optimal working conditions. 
Finally, I want to give special thanks to the anonymous referee, whose comments and suggestions have greatly improved the presentation and readability of this paper.
My research was supported by the CONACYT grant no. 432521/286915.

\subsection*{Notation}
Here we list some notation to be used throughout the article. If $K$ is a perfect field, we denote by $\overline{K}$ an algebraic closure of $K$ and by $G_K$ the absolute Galois group $\Gal(\overline{K} / K)$.
Let $\chi_\ell$ denote the $\ell$-adic cyclotomic character and $\overline{\chi}_\ell$ its reduction modulo $\ell$.
If $K$ is a number field or a finite extension of $\Q_p$, we denote by $\mathcal{O}_K$ its ring of integers. For a maximal ideal $\mathfrak{p}$ of $\mathcal{O}_K$, we let $D_\mathfrak{p}$ and $I_\mathfrak{p}$ be the corresponding decomposition and inertia group at $\mathfrak{p}$ respectively, and we denote by $\Frob_\mathfrak{p}$ the geometric Frobenius. 
Finally, $\WD(\rho)^{F-ss}$ will denote the Frobenius semisimplification of the Weil-Deligne representation attached to a representation $\rho$ of $G_{\Q_p}$ and $\rec$ is the notation for the Local Langlands Correspondence which attaches to an irreducible admissible representation of $\GL_n(\Q_p)$ a Weil-Deligne representation of the Weil group $W_{\Q_p}$ as in \cite{HT01}. Finally if $G$ is a finite group we denote by $G^{(i)}$ the $i$-th derived subgroup of $G$ and we use $G^{\infty}$ to denote $\bigcap_{i\geq0}$ $G^{(i)}$.


\section{Preliminaries on classical groups}\label{sec:2}

It is well known that there is a lack of consistency in the literature for the notation used for the classical groups. Hence, we include this section in order to fix the notation for orthogonal and symplectic groups that we will use through this paper. Our main references are \cite{BHR13} and \cite{KL90}.

Let $n$ be a positive integer, $K$ a field of characteristic different from 2 and $V$ an $n$-dimensional $K$-vector space with a non-degenerate bilinear pairing $\langle \cdot, \cdot \rangle$. We define the \emph{similitude group} $\Delta(V)$ of $\langle \cdot , \cdot \rangle$ as 
\[
\{ g \in \GL(V)  :  \langle gv, gw \rangle = \mul(g) \langle v,w \rangle, \mbox{ with } \mul(g) \in K^*, \mbox{ for all }v,w \in V \}.
\]
The character $\mul : \Delta (V) \rightarrow K^*$ is called the \emph{multiplier} (or \emph{similitude factor}). The \emph{isometry group} of $\langle \cdot , \cdot \rangle$ is the subgroup $I(V)$ of $\Delta(V)$ of elements with multiplier 1 and the \emph{special group} of $\langle \cdot, \cdot \rangle$ is the subgroup $S(V)$ of $\Delta(V)$ consisting of all matrices with determinant $1$. 

Let $\mathcal{B} = \{ e_1, \ldots , e_n\}$ be a basis of $V$. We define the matrix of the pairing $\langle \cdot , \cdot \rangle$ with respect to $\mathcal{B}$ as $J = (b_{ij})_{n \times n}$, where $b_{ij} = \langle e_i , e_j \rangle$ for all $i$ and $j$.
In particular, if $\langle \cdot , \cdot \rangle $ is alternating, it can be shown that $n$ is even and that we can choose a basis such that the matrix of $\langle \cdot , \cdot \rangle$ has the standard form
\[
J := \left( 
\begin{matrix}
0 & S \\
-S & 0
\end{matrix}
\right)
\quad \mbox{with} \quad S := 
\left( 
\begin{smallmatrix}
0 &  & 1 \\
 & \iddots &  \\
1 &  & 0
\end{smallmatrix}
\right) \in \mbox{M}_{\frac{n}{2}}.
\]
Then we can define the \emph{symplectic similitude group} of the alternating pairing $\langle \cdot, \cdot \rangle$ as $\GSp_n(K):= \Delta(V)$ and the \emph{symplectic group} of $\langle \cdot , \cdot \rangle$ as $\Sp_n(K) := I(V)$. Note that in this case all elements of $\Sp_n(K)$ have determinant one, then $\Sp_n(K) = S(V)$ too.

On the other hand, if $\langle \cdot , \cdot \rangle$ is a symmetric pairing, we define the \emph{orthogonal similitude group} of $\langle \cdot , \cdot \rangle$ as $\GO(V) := \Delta(V)$ and the \emph{orthogonal group} of $\langle \cdot , \cdot \rangle$ as $\mbox{O}(V):= I(V)$, whose elements have determinant $\pm 1$. Finally, we define the \emph{special orthogonal group} of $\langle \cdot , \cdot \rangle$ as $\SO(V)= S(V)$. Since $K$ is a field of characteristic different from 2, it can be shown that for each symmetric pairing there exists a basis such that its matrix is diagonal.
If $K$ is an algebraically closed field, it can be shown that all symmetric pairings are equivalent. Then in this case we take the identity matrix $I_n$ as the matrix of the standard symmetric form. For such form, we will write $\GO_n(K)$, $\mbox{O}_n(K)$ and $\SO_n(K)$ instead $\GO(V)$, $\mbox{O}(V)$ and $\SO(V)$.

Let $\ell$ be an odd prime and $r$ be a positive integer. If $K$ is a finite field of order $\ell^r$ and $n$ is even, there are precisely two symmetric pairings on $V$ (up to equivalence), corresponding to the cases when the determinant of the matrix of the form is a square or non square of $K^\times$. We say that a symmetric pairing $\langle \cdot , \cdot \rangle$ has \emph{plus type} if its matrix is equivalent to 
\[
J_+ :=  \left( 
\begin{matrix}
0 &  & 1 \\
 & \iddots &  \\
1 &  & 0
\end{matrix}
\right) \in \mbox{M}_{n}, 
\]
otherwise it has \emph{minus type}.  As expected, $J_+$ will be the matrix of our standard symmetric pairing of plus type. For the minus type we will use the matrix $I_n$ when it is not equivalent to $J_+$ (this occurs if and only if $n\equiv 2 \mod 4$ and $\ell^r \equiv 3 \mod 4$). Otherwise, our standard symmetric pairing of minus type  will have matrix 
\[
J_- :=  \left( 
\begin{matrix}
\omega & &  &  \\
 & 1 &  &  \\
  &  & \ddots & \\
 &  &  & 1
\end{matrix}
\right) \in \mbox{M}_{n}, 
\]
where $\omega$ is a fixed primitive element of $K^\times$. Then for our standard symmetric pairing of plus type (resp. minus type) we will write $\GO^+_n(K)$, $\mbox{O}^+_n(K)$ and $\SO^+_n(K)$ (resp. $\GO^-_n(K)$, $\mbox{O}^-_n(K)$ and $\SO^-_n(K)$) instead $\GO(V)$, $\mbox{O}(V)$ and $\SO(V)$.

In contrast with the symplectic case where the projectivization $\PSp_n(K)$ of $\Sp_n(K)$ (with $n \geq 4$ and $K$ a finite field of odd characteristic) is a simple group, in the orthogonal case this does not happen. Then we need to define the \emph{quasisimple classical group} of the symmetric form $\langle \cdot , \cdot \rangle$ as $\Omega(V):=I'(V)$, the derived subgroup of $I(V)$. 
In particular, if $K$ is a finite field of odd characteristic, we denote by $\Omega^+_n(K)$ and $\Omega^-_n(K)$ the quasisimple orthogonal group of plus type and minus type respectively.  In particular, if $V$ is a vector space with a symmetric pairing over a finite field of odd characteristic and $n\geq 8$, it can be proved that $\POM(V)$ is a simple group (see Theorem 2.1.3 of \cite{KL90}). 

A useful tool, through this paper, will be to know the indices between the projectivizations of the symplectic and orthogonal groups defined above when $K$ is a finite field of odd characteristic. These indices are: $[\PGSp_n(K):\PSp_n(K)]=2$, $[\PGO^\pm_n(K):\PO^\pm_n(K)]=2$, $[\PO^\pm_n(K):\PSO^\pm_n(K)]=2$ and $[\PSO^\pm_n(K):\POM^\pm_n(K)]=a_\pm$, where the values of $a_+$ and $a_-$ are defined by the following conditions: $a_\pm\in \{ 1,2 \}$, $a_+ a_- = 2$, and $a_+ =2$ if and only if $n(\ell^r-1)/4$ is even.


\section{Polarized representations}\label{sec:3}

In this section we will review some facts about regular algebraic, essentially self-dual, cuspidal automorphic representations of $\GL_n(\A_\Q)$ and the Galois representations associated to them. Our main reference is \cite[$\S 2.1$]{BLGGT}. We refer the reader to loc. cit. for more details and references.

Let $\ell$ be a prime and $\iota: \overline{\Q}_\ell \cong \C$ be a fixed isomorphism. 
By a \emph{polarized} Galois representation of $G_{\Q}$ we will mean a pair $(\rho,\vartheta)$, where 
\[
\rho: G_\Q \longrightarrow \GL_n(\overline{\Q}_\ell) \quad \mbox{ and }\quad \vartheta :G_\Q \longrightarrow \overline{\Q}_\ell^\times  
\]
are continuous homomorphisms such that there is $\varepsilon \in \{ \pm 1 \}$ and a non-degenerate pairing $\langle \cdot , \cdot \rangle$ on $\overline{\Q}_\ell^n$ such that
\[
\langle x , y \rangle = \varepsilon \langle y , x \rangle \quad \mbox{ and } \quad 
\langle \rho (\sigma) x, \rho (c \sigma c)y \rangle = \vartheta(\sigma) \langle x, y \rangle
\]
for a complex conjugation $c$ and for all $x,y \in \overline{\Q}_\ell^n$ and all $\sigma \in G_\Q$. Note that $(\rho,\vartheta)$ is polarized Galois representation if and only if either $\vartheta(c) = - \varepsilon$ and $\rho$ factors through $\GSp_n(\overline{\Q}_\ell)$ with multiplier $\vartheta$ or $\vartheta(c) = \varepsilon$ and $\rho$ factors through $\GO_n(\overline{\Q}_\ell)$  with multiplier $\vartheta$. Finally, we say that $(\rho,\vartheta)$ is \emph{totally odd} if $\varepsilon =1$.

On the other hand, by a RAESDC (regular algebraic, essentially self-dual, cuspidal) automorphic representation of $\GL_n(\A_\Q)$ we mean a pair $(\pi, \mu)$ consisting of a cuspidal automorphic representation $\pi = \pi_\infty \otimes \pi_f$ of $\GL_n(\A_\Q)$ and a continuous character $\mu: \A_\Q^\times / \Q^\times \rightarrow \C^\times$ such that:
\begin{enumerate}
\item (regular algebraic) $\pi_\infty$ has the same infinitesimal character as an irreducible algebraic representation of $\GL_n$.
\item (essentially self-dual) $\pi \cong \pi ^{\vee} \otimes (\mu \circ \det)$.
\end{enumerate}

Note that $\mu$ is necessarily algebraic, i.e., there exist an integer $\alpha$ such that $\mu \vert_{(K^\times_\infty)^0} : x \mapsto x^\alpha$. Then, for a fixed prime $\ell$, we can attach to $\mu$ a unique continuous character 
\[
\rho_{\ell,\iota}(\mu): G_\Q \longrightarrow \overline{\Q}_\ell^\times
\]
such that for all $p \neq \ell$, we have $\iota \circ \rho_{\ell,\iota}(\mu) \vert _{W_{\Q_p}} \circ \Art_{\Q_p} = \mu_p$, where $\Art_{\Q_p}: \Q_p^\times \rightarrow W_{\Q_p}^{ab}$ is the Artin map normalized to send uniformizers to geometric Frobenius elements.

Let $a=(a_i) \in \Z^n$ such that $a_1 \geq \ldots \geq a_n$ and let $\Xi _{a}$ denotes the irreducible algebraic representation of $\GL_n$ with highest weight $a$. We say that a RAESDC automorphic representation $(\pi, \mu)$ of $\GL_n(\A_\Q)$ has \emph{weight} $a$ if $\pi_{\infty}$ has the same infinitesimal character as $\Xi _{a}^\vee$. 

Thanks to the work of Caraiani, Chenevier, Clozel, Harris, Kottwitz, Shin, Taylor and several others, we can associate compatible systems of Galois representations to RAESDC automorphic representations of $\GL_n(\A_\Q)$ as follows.

\begin{theo}\label{raesdc}
Let $(\pi, \mu)$ be a RAESDC automorphic representation of $\GL_n(\A_\Q)$ and $S$ be the finite set of primes $p$ such that $\pi_p$ is ramified. Then there is a compatible system of semi-simple Galois representations
\[
\rho_{\ell, \iota}(\pi) : G_\Q \longrightarrow \GL_n(\overline{\Q}_\ell)
\]
unramified outside $S \cup \{ \ell \}$ and such that the following properties are satisfied.
\begin{enumerate}
\item $(\rho_{\ell,\iota}(\pi), \chi_\ell^{1-n} \rho_{\ell,\iota}(\mu))$ is a totally odd polarized Galois representation, where $\chi_\ell$ denotes the $\ell$-adic cyclotomic character.
\item $\rho_{\ell, \iota} (\pi)$, restricted to a decomposition group at $\ell$, is de Rham and if $\ell \notin S$, it is crystalline.
\item The set of Hodge-Tate weights $\HT(\rho_{\ell,\iota}(\pi))$ of $\rho_{\ell, \iota}(\pi)$ is equal to
\[
\{ a_1 + (n-1), a_2+(n-2),\ldots , a_n \}.
\]
In particular, they are $n$ different numbers and they are independent of $\ell$.
\item Whether $p \nmid \ell$ or $p \vert \ell$, we have:
\[
\iota \WD(\rho_{\ell,\iota}(\pi) \vert_{\Q_p})^{F-ss} \cong \rec_p (\pi_p \otimes \vert \det \vert _p ^{(1-n)/2}).
\]
\end{enumerate}
\end{theo}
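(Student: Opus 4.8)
The plan is to assemble Theorem~\ref{raesdc} from the now-standard construction of Galois representations in the cohomology of unitary Shimura varieties; the work is less to prove something new than to invoke the right results and to check that the normalizations --- the geometric-Frobenius convention for $\Art$, the twist by $\vert\det\vert^{(1-n)/2}$, and the cyclotomic factor $\chi_\ell^{1-n}$ in the multiplier --- stay mutually consistent. First I would pass to the conjugate self-dual setting over a CM field: choose an imaginary quadratic $E/\Q$ and, by solvable base change (Arthur--Clozel), attach to $(\pi,\mu)$ a regular algebraic, conjugate self-dual, cuspidal automorphic representation of $\GL_n(\A_E)$ (after an auxiliary algebraic twist turning ``essentially self-dual over $\Q$'' into ``conjugate self-dual over $E$''). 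For such representations the associated semisimple $\ell$-adic Galois representations of $G_E$ are realized in the \'etale cohomology of Shimura varieties attached to unitary similitude groups; this is the work of Kottwitz, Clozel and Harris--Taylor \cite{HT01}, extended to the non quasi-split case and to all $n$ by Shin and by Chenevier--Harris \cite{CH13}. Since $\pi$ is essentially self-dual over $\Q$ and conjugation by a complex conjugation $c$ is an inner automorphism of $G_\Q$, the conjugate self-duality over $E$ descends to an honest $G_\Q$-equivariant pairing, and one obtains $\rho_{\ell,\iota}(\pi)$ on $G_\Q$ together with its multiplier; it is unramified outside $S\cup\{\ell\}$ and forms a compatible system because the characteristic polynomials of Frobenius at the unramified primes are read off from the Satake parameters of $\pi$ via $\rec$, hence independent of $\ell$ and $\iota$.

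Properties (i) and (iii) are comparatively formal once this cohomological realization is at hand. For (iii), the Hodge--Tate weights are governed by the infinitesimal character of $\pi_\infty$, i.e.\ by the weight $a$, and the shift $a_i+(n-i)$ is precisely the ``$\rho$-shift'' built into the twist by $\vert\det\vert^{(1-n)/2}$; regularity of $\pi_\infty$ forces the $n$ weights to be distinct, and they are $\ell$-independent because $a$ is. For (i), essential self-duality $\pi\cong\pi^\vee\otimes(\mu\circ\det)$ transports to $\rho_{\ell,\iota}(\pi)\cong\rho_{\ell,\iota}(\pi)^\vee\otimes\bigl(\chi_\ell^{1-n}\rho_{\ell,\iota}(\mu)\bigr)$ --- the cyclotomic factor coming once more from the $\vert\det\vert^{(1-n)/2}$ normalization --- hence a Galois-equivariant pairing with multiplier $\vartheta=\chi_\ell^{1-n}\rho_{\ell,\iota}(\mu)$, under which $\rho_{\ell,\iota}(\pi)$ factors through $\GSp_n$ or $\GO_n$ according to the value of $\vartheta(c)$. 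That the pair is \emph{totally odd}, i.e.\ $\varepsilon=1$ --- equivalently, that the pairing is symmetric exactly when $\vartheta(c)=1$, the natural generalization of the classical oddness of modular forms --- is the one genuinely arithmetic input here; it follows from the sign computations of Bella\"{\i}che--Chenevier and Taylor, or from purity of the weight--monodromy on the cohomology realizing $\rho_{\ell,\iota}(\pi)$.

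For (ii) and (iv) I would invoke local--global compatibility. At primes $p\neq\ell$, the isomorphism $\iota\,\WD(\rho_{\ell,\iota}(\pi)\vert_{\Q_p})^{F-ss}\cong\rec_p(\pi_p\otimes\vert\det\vert_p^{(1-n)/2})$, \emph{including} the monodromy operator, is due to Taylor--Yoshida in the Iwahori-spherical range and to Caraiani \cite{Car12a} in general (through her analysis of nearby cycles on the relevant Shimura varieties); after base change to $E$ it reduces to the unitary case. At $p\mid\ell$, the de Rham property, crystallinity when $\ell\notin S$ (a consequence of good reduction of the Shimura variety at $\ell$ when $\pi_\ell$ is unramified), and the compatibility statement are Caraiani's theorem \cite{Car12b} (see also Barnet-Lamb--Gee--Geraghty--Taylor). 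Transporting these back along the base-change descent yields (ii) and (iv).

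The main obstacle --- and the reason this statement can only be ``proved'' here by citation --- is exactly this full local--global compatibility: the monodromy operator at $p\neq\ell$ and the entire case $p\mid\ell$ rest on deep geometric inputs (vanishing-cycle spectral sequences on Shimura varieties, $p$-adic Hodge theory of their cohomology) that cannot be reproduced in a few lines. What remains is bookkeeping that must still be done with care: the three normalizations above have to be kept in lockstep, since a stray sign or an integer shift would corrupt the Hodge--Tate weights in (iii) and the self-duality sign in (i). With those pinned down, all four properties follow formally from the cited results.
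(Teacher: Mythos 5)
The paper does not prove this theorem; it states it as a known result and cites it directly to \cite{Car12a}, \cite{Car12b} and \cite{CH13}, and your proposal correctly recognizes that the statement can only be established here by appeal to those works, supplying an accurate sketch of the underlying construction (base change to a CM field, realization in the cohomology of unitary Shimura varieties, and Caraiani's local--global compatibility). Since both you and the paper ultimately rest on the same citations, the approaches coincide.
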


We will say that a compatible system $\mathcal{R} = \{ \rho_\ell \}_\ell$ of Galois representations $\rho_\ell:G_\Q \rightarrow \GL_n(\overline{\Q}_\ell)$ is \emph{totally odd polarizable} if there is a compatible system $\Theta = \{ \vartheta_\ell \}_\ell$ of characters $\vartheta_\ell : G_\Q \rightarrow \overline{\Q}_\ell$ such that $(\rho_\ell,\vartheta_\ell)$ is a totally odd polarized Galois representation for all $\ell$. In particular, the compatible system $\mathcal{R}(\pi) = \{  \rho_{\ell, \iota} (\pi) \}_\ell$ associated to a RAESDC automorphic representation $(\pi, \mu)$ of $\GL_n(\A_\Q)$ as in the previous theorem is totally odd polarizable with $\vartheta_\ell = \chi_\ell^{1-n} \rho_{\ell,\iota}(\mu)$.


\section{Regular representations}\label{sec:4}

Let $\overline{\rho} : G_\Q \rightarrow \GL_n(\overline{\F}_\ell)$ be a mod $\ell$ Galois representations. Recall that $\overline{\rho}$ is \emph{regular}, in the sense of \cite{ADW14}, if there exist an integer $s$ between $1$ and $n$, and for each $i=1, \ldots, s$ a set $A_i = \{ a_{i,1}, \ldots , a_{i,r_i} \}$ of natural numbers $0 \leq a_{i,j} \leq \ell-1$ of cardinality $r_i$, with $r_1 + \cdots + r_s = n$ (i.e., all the $a_{i,j}$ are distinct) such that if we denote by $B_i$ the matrix
\[
B_i \sim  
\left( \begin{array}{cccc}
\psi_{r_i}^{b_i} &   &  & 0 \\
 & \psi_{r_i}^{b_i \ell} &  & \\
 &  & \ddots & \\
0 &  & & \psi_{r_i}^{b_i \ell^{r_i-1}} \end{array} \right)
\]
with $\psi_{r_i}$ a fixed choice of a fundamental character of niveau $r_i$ and $b_i = a_{i,1} + a_{i,2}\ell + \cdots + a_{i,r_i} \ell^{r_i-1}$, then 
\[
\overline{\rho} \vert _{I_\ell} \sim  
\left( \begin{array}{ccc}
B_1  &  & * \\
  & \ddots & \\
0 & & B_s \end{array} \right).
\]
The elements of $A := A_1 \cup \cdots \cup A_s$ are called \emph{tame inertia weights} of $\overline{\rho}$. We say that $\overline{\rho}$ has \emph{tame inertia weights at most} $k$ if $A \subseteq \{ 0,1, \ldots, k\}$. 

Under the assumption of regularity and boundedness of tame inertia weights, we have the following useful result which was proved in Section 3 of \cite{ADW14}. 

\begin{lemm}\label{inert}
Let $n,k \in \N$, with $n$ even, and $\overline{\rho}: G_{\Q} \rightarrow \GL_n(\overline{\F}_\ell)$ be a Galois representation which is regular with tame inertia weights at most $k$. Assume that $\ell > kn! + 1$. Then all $n!$-th powers of the characters on the diagonal of $\overline{\rho} \vert _{I_\ell}$ are distinct.   
\end{lemm}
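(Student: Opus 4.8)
The plan is to extract the tame inertia weights explicitly from the niveau decomposition and then count the number of distinct values obtained after raising to the $n!$-th power. First I would observe that, writing $\overline\rho|_{I_\ell}$ in the block form as in the definition of regularity, the characters appearing on the diagonal are exactly the $\psi_{r_i}^{b_i \ell^t}$ for $i = 1,\dots,s$ and $t = 0,\dots,r_i-1$, where $\psi_{r_i}$ is a fundamental character of niveau $r_i$ and $b_i = a_{i,1} + a_{i,2}\ell + \cdots + a_{i,r_i}\ell^{r_i-1}$ with the $a_{i,j}$ all distinct integers in $\{0,1,\dots,k\}$. Since the fundamental character $\psi_{r_i}$ of niveau $r_i$ has order exactly $\ell^{r_i}-1$, and since $\psi_{r_i}$ restricted appropriately is a power of a fundamental character of niveau $n$ (using that each $r_i \mid n$, or more simply embedding everything into niveau $\mathrm{lcm}$), the key point is to compare exponents modulo $\ell^{r_i}-1$.

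The main step is the following: fix two diagonal characters, say $\psi_{r_i}^{b_i \ell^{t}}$ and $\psi_{r_j}^{b_j \ell^{u}}$, and suppose their $n!$-th powers coincide. I would like to conclude that then the characters themselves already essentially force a collision among the $a_{\bullet,\bullet}$, contradicting their distinctness. Concretely, raising to the $n!$-th power and using a common fundamental character of niveau $N := \mathrm{lcm}(r_1,\dots,r_s)$ (note $N \le n$, so $\psi_N$ has order $\ell^N - 1$), the equality of the two characters becomes a congruence of the form
\[
n!\,\bigl( b_i' \ell^{t} - b_j' \ell^{u} \bigr) \equiv 0 \pmod{\ell^N - 1},
\]
where $b_i'$, $b_j'$ are the base-$\ell$ expansions rescaled to niveau $N$; each $b_i'$ is a sum of at most $N$ digits from $\{0,\dots,k\}$ placed in distinct base-$\ell$ positions, so $0 \le b_i' \le k(\ell^{N}-1)/(\ell-1) < k\ell^{N-1} \cdot \tfrac{\ell}{\ell-1}$. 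Multiplying a cyclic shift back, the quantity $b_i' \ell^t - b_j' \ell^u$ reduced mod $\ell^N - 1$ is again an integer whose absolute value is bounded by roughly $2k\ell^{N-1}\tfrac{\ell}{\ell-1} < (\ell^N-1)/n!$ once $\ell > kn! + 1$ (this is where the hypothesis on $\ell$ enters, giving room because $n! \ge N!$ and the factor $k \cdot n!$ is swamped by $\ell$). Hence multiplying by $n!$ keeps us strictly inside one period, so the congruence forces the genuine equality $b_i' \ell^{t} \equiv b_j' \ell^{u} \pmod{\ell^N-1}$, i.e. the two characters were already equal.

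It then remains to rule out that two \emph{distinct} diagonal characters can be equal, which is a matter of uniqueness of base-$\ell$ digit expansions: an equality $\psi_{r_i}^{b_i \ell^t} = \psi_{r_j}^{b_j \ell^u}$ translates, after passing to niveau $N$, into two base-$\ell$ strings of length $N$ (with digits in $\{0,\dots,k\}$, hence genuine digits since $k \le \ell-1$) being cyclic shifts of one another; comparing digit multisets and using that within each block the $a_{i,1},\dots,a_{i,r_i}$ are distinct, one sees that such an equality can only hold when $i = j$ and $t = u$. I expect the bookkeeping with the rescaling to niveau $N = \mathrm{lcm}(r_i)$ and tracking the cyclic shifts to be the main technical obstacle; it is entirely elementary but requires care to make the inequality $|b_i'\ell^t - b_j'\ell^u \bmod (\ell^N-1)| \cdot n! < \ell^N - 1$ come out cleanly, which is precisely why the quantitative bound $\ell > kn! + 1$ is imposed rather than something weaker. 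Since the argument is already carried out in Section~3 of \cite{ADW14}, I would cite it for the details and only indicate this digit-counting mechanism.
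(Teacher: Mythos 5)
Your proposal matches the paper's treatment: the paper states Lemma~\ref{inert} with only a citation to Section~3 of \cite{ADW14}, and your digit-counting mechanism (pass to a common fundamental character of niveau $N$, bound exponents so that multiplication by $n!$ cannot wrap around modulo $\ell^N-1$, then use distinctness of the $a_{i,j}$ to rule out coincidences of cyclic digit strings) is precisely the argument carried out there. One small arithmetic point if you were to write this out fully: since the cyclic-shift representatives of $b_i'\ell^t$ and $b_j'\ell^u$ modulo $\ell^N-1$ are both nonnegative with all base-$\ell$ digits at most $k$, each is at most $k(\ell^N-1)/(\ell-1)$ and hence so is the absolute value of their difference (no factor of $2$ is needed); then $n!\cdot k(\ell^N-1)/(\ell-1) < \ell^N-1$ is literally equivalent to $\ell > kn!+1$, whereas the looser estimate $2k\ell^{N-1}\cdot\ell/(\ell-1)$ you quote would not be dominated by $(\ell^N-1)/n!$ under the stated hypothesis.
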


Let $K/\Q$ be a finite extension of degree $d$ and $\overline{\rho}_0:G_K \rightarrow \GL_m(\overline{\F}_\ell)$ be a Galois representation. Let $V_0$ be the $\overline{\F}_\ell$-vector space underlying $\overline{\rho}_0$.
The induced representation $\Ind_{G_K}^{G_\Q}\overline{\rho}_0$ of $\overline{\rho}_0$ from $G_K$ to $G_\Q$ is by definition the $\overline{\F}_\ell$-vector space $\Hom_{G_K} (G_\Q, V_0)$ defined as
\[
\{ \phi:G_\Q \rightarrow V_0 : \phi(\sigma \tau) = \overline{\rho}_0(\tau^{-1})\phi(\sigma) \mbox{ for all }\tau \in G_K \mbox{ and }\sigma \in G_\Q \},
\]
where $\sigma \in G_\Q$ acts on $\phi \in \Hom_{G_K} (G_\Q, V_0)$ by $\sigma \cdot \phi( \cdot ) = \phi(\sigma^{-1} \; \cdot \; )$. 
Let $\{ \gamma_1, \ldots, \gamma_d \}$ be a full set of representatives in $G_\Q$ of the left cosets in $G_\Q / G_K$. The map $\phi \mapsto \oplus_{i=1}^d \phi(\gamma_i)$ gives an isomorphism between $\Hom_{G_K} (G_\Q, V_0)$ and the direct sum $\bigoplus _{i=1}^d V_i$ (where each $V_i$ is isomorphic to $V_0$). Via this identification the action of $G_\Q$ on $\bigoplus _{i=1}^d V_i$ is given by
\[
( \Ind_{G_K}^{G_\Q} \overline{\rho}_0 ) (\sigma) ( \mathop{\oplus}_{i=1}^d v_i ) =  \mathop{\oplus}_{i=1}^d \overline{\rho}_0(\gamma_i^{-1} \sigma \gamma _{\sigma(i)})(v_{\sigma(i)}) , 
\]
where $\sigma^{-1}\gamma_i \in \gamma_{\sigma(i)} G_K$. Indeed, 
\[
\mathop{\oplus}_{i=1}^d \phi(\gamma_i)  \stackrel{\sigma}{\mapsto} \mathop{\oplus}_{i=1}^d \phi(\sigma^{-1} \gamma_{i}) = \mathop{\oplus}_{i=1}^d\overline{\rho}_0 (\gamma_i^{-1} \sigma \gamma_{\sigma(i)})(\phi(\gamma_{\sigma(i)})).
\] 
Then by using the previous lemma we can prove the following result about the ramification of induced representations.

\begin{coro}\label{indu}
Let $n,m,k \in \N$, $a \in \Z$ and $\ell > kn! + 1$ be a prime. Let $K/\Q$ be a finite extension of degree $d$ such that $dm=n$, $\overline{\rho}_0:G_K \rightarrow \GL_m(\overline{\F}_\ell)$ be a Galois representation and $\overline{\rho} = \Ind^{G_\Q}_{G_K} \overline{\rho}_0$. If  $\overline{\chi}_\ell^a \otimes \overline{\rho}$ is regular with tame inertia weights at most $k$, then $K/\Q$ does not ramify at $\ell$.
\end{coro}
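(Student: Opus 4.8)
The plan is to argue by contradiction. Suppose $K/\Q$ ramifies at $\ell$, fix a prime $\mathfrak{p}$ of $K$ above $\ell$ with ramification index $e:=e(\mathfrak{p}/\ell)\geq 2$, and aim to exhibit two distinct characters among those on the diagonal of $(\overline{\chi}_\ell^a\otimes\overline{\rho})|_{I_\ell}$ whose quotient is a nontrivial character of order dividing $e$. Since $d:=[K:\Q]$ divides $n$ and $\ell>kn!+1>n\geq d\geq e$, that quotient then has order dividing $n$, hence dividing $n!$, so the two characters share the same $n!$-th power, which will contradict Lemma \ref{inert}. Note also that $\ell\nmid e$, so that $K_\mathfrak{p}/\Q_\ell$ is tamely ramified.

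First I would set up the local picture. Fixing an inertia subgroup $I_\ell\subseteq G_\Q$ at $\ell$ and, after choosing the underlying prime of $\overline{\Q}$ above $\mathfrak{p}$, writing $I_\mathfrak{p}:=I_\ell\cap G_{K_\mathfrak{p}}$ for the inertia at $\mathfrak{p}$, one has $[I_\ell:I_\mathfrak{p}]=e$ (the residue-degree factor $f(\mathfrak{p}/\ell)$ sits in the pro-cyclic unramified quotient and does not contribute here). Applying Mackey's formula to $\Res_{I_\ell}\Ind_{G_K}^{G_\Q}\overline{\rho}_0$ gives a direct sum indexed by the double cosets $I_\ell\backslash G_\Q/G_K$, and the double coset of the identity contributes a summand isomorphic to $\Ind_{I_\mathfrak{p}}^{I_\ell}(\overline{\rho}_0|_{I_\mathfrak{p}})$. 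Passing to semisimplifications and using that the wild inertia inside $I_\mathfrak{p}$ has finite $\ell$-power image, hence acts unipotently, $(\overline{\rho}_0|_{I_\mathfrak{p}})^{\mathrm{ss}}=\bigoplus_{j=1}^{m}\phi_j$ is a sum of tame characters of the pro-cyclic group $I_\mathfrak{p}^{\mathrm{tame}}$.

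The remaining ingredient is elementary. Because $I_\ell^{\mathrm{tame}}$ is pro-cyclic abelian, $I_\mathfrak{p}^{\mathrm{tame}}$ is its subgroup of index $e$, and $\ell\nmid e$, each $\phi_j$ extends to a character $\tilde\phi_j$ of $I_\ell$, and $\Ind_{I_\mathfrak{p}}^{I_\ell}(\phi_j)$ has semisimplification $\bigoplus_{\zeta\in\mu_e(\overline{\F}_\ell)}\tilde\phi_j\,\chi_\zeta$, where the $\chi_\zeta$ run over the characters of $I_\ell/I_\mathfrak{p}\cong\Z/e\Z$. Taking $\zeta_0$ with $\chi_{\zeta_0}$ of order $e$, both $\tilde\phi_1$ and $\tilde\phi_1\chi_{\zeta_0}$ occur among the composition factors of $(\Ind_{G_K}^{G_\Q}\overline{\rho}_0)|_{I_\ell}$, hence $\overline{\chi}_\ell^a|_{I_\ell}\cdot\tilde\phi_1$ and $\overline{\chi}_\ell^a|_{I_\ell}\cdot\tilde\phi_1\chi_{\zeta_0}$ occur among the diagonal characters of $(\overline{\chi}_\ell^a\otimes\overline{\rho})|_{I_\ell}$, and they are distinct since $\chi_{\zeta_0}\neq 1$. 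As this representation is regular with tame inertia weights at most $k$ and $\ell>kn!+1$, Lemma \ref{inert} forces all $n!$-th powers of its diagonal characters to be distinct, so $\chi_{\zeta_0}^{n!}\neq 1$; but $\chi_{\zeta_0}$ has order $e$ dividing $d$, hence dividing $n$, so $\chi_{\zeta_0}^{n!}=1$, the desired contradiction.

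I expect the main obstacle to be the bookkeeping in the local/Mackey step: checking that $[I_\ell:I_\mathfrak{p}]=e$ in the presence of (tame) ramification, that the summand $\Ind_{I_\mathfrak{p}}^{I_\ell}(\overline{\rho}_0|_{I_\mathfrak{p}})$ genuinely appears with no stray twist, and that semisimplification may be interchanged with induction in the way used above, bearing in mind that the induction of an irreducible need not be irreducible. Once the right pair of diagonal characters has been produced, the conclusion is immediate from Lemma \ref{inert}.
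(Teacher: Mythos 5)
Your argument is correct, but it takes a genuinely different route from the one the paper follows (the paper cites Proposition~3.4 of \cite{ADW14}, and its own proof of the parallel statement Corollary~\ref{tindu} shows the intended strategy). The paper's method passes to the Galois closure $\widetilde K$, picks $\sigma\in I_\ell\setminus G_{\widetilde K}$ and $\tau\in I_\Lambda$, and exploits that the tame quotient $I_\ell/I_{\ell,w}$ is procyclic to place the commutator $\sigma^{-1}\tau\sigma\tau^{-1}$ in wild inertia; hence ${}^{\gamma}\overline{\rho}_0(\sigma^{-1}\tau\sigma)$ and ${}^{\gamma}\overline{\rho}_0(\tau)$ have the same eigenvalues, and the direct-sum (respectively tensor) structure of $\overline{\rho}|_{I_\Lambda}$ then collides with Lemma~\ref{inert}. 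You instead stay with $K$ itself, apply Mackey to $\Res_{I_\ell}\Ind_{G_K}^{G_\Q}\overline{\rho}_0$, isolate the identity double-coset summand $\Ind_{I_\mathfrak p}^{I_\ell}(\overline{\rho}_0|_{I_\mathfrak p})$, semisimplify to tame characters, and use the projection formula together with the fact that $\Ind_{I_\mathfrak p}^{I_\ell}\mathbf 1$ is the regular representation of the cyclic group $I_\ell/I_\mathfrak p$ of order $e$ (semisimple since $\ell\nmid e$) to exhibit diagonal characters differing by a nontrivial $\chi_{\zeta_0}$ of order $e\mid n!$. Both routes get their contradiction from Lemma~\ref{inert}. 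The commutator argument has the advantage of adapting uniformly to tensor induction (which is why the paper can reuse it for Corollary~\ref{tindu}), whereas your Mackey computation is arguably more transparent for ordinary induction but would require a tensor-induction analogue of Mackey to treat the other case.

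One small inaccuracy worth correcting: you twice assert that $e=e(\mathfrak p/\ell)$ divides $d=[K:\Q]$ and hence divides $n$. For a non-Galois $K/\Q$ the ramification index need not divide the degree; one only has $ef\leq d$, so $e\leq d\leq n$. Fortunately the only fact you use is $\chi_{\zeta_0}^{n!}=1$, and $e\leq n$ already gives $e\mid n!$, so the conclusion stands; just replace ``$e$ divides $d$, hence divides $n$'' by ``$e\leq d\leq n$, hence $e\mid n!$''. The points you flag as potential obstacles are all fine: $[I_\ell:I_\mathfrak p]=e$ because tameness of $K_\mathfrak p/\Q_\ell$ (from $\ell>n\geq e$) gives $I_{\ell,w}\subseteq I_\mathfrak p$ and then $I_\ell/I_\mathfrak p\cong I_\ell^{\mathrm{tame}}/I_\mathfrak p^{\mathrm{tame}}$ is cyclic of order $e$; the identity double coset in Mackey gives exactly $\Ind_{I_\ell\cap G_K}^{I_\ell}(\overline{\rho}_0|_{I_\ell\cap G_K})$ with no twist; and induction from a finite-index subgroup is exact, so it commutes with semisimplification at the level of composition factors, which is all you need.
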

\begin{proof}
The proof of this result is given in \cite[Proposition 3.4]{ADW14}.
\end{proof}

Keeping the same notation as in the previous paragraphs we define the \emph{tensor induced representation} $\TInd_{G_K}^{G_\Q}\overline{\rho}_0 : G_\Q \mapsto \GL(\bigotimes_{i=1}^d V_i)$ as:
\[
(\TInd_{G_K}^{G_\Q} \overline{\rho}_0)(\sigma) ( \mathop{\otimes}_{i=1}^d v_i ) =  \mathop{\otimes}_{i=1}^d \overline{\rho}_0(\gamma_i^{-1} \sigma \gamma _{\sigma(i)})(v_{\sigma(i)}).
\]
Note that for all $\sigma \in G_\Q$ the map $\gamma_i \mapsto \gamma_{\sigma(i)}$ is a permutation of $\{ 1, \ldots d\}$ which is trivial if and only if $\sigma \in G_{\widetilde{K}}$, where $\widetilde{K}$ denotes the Galois closure of $K/\Q$. Then for each $\sigma \in G_{\widetilde{K}}$ we have that 
\[
(\TInd_{G_K}^{G_\Q} \overline{\rho}_0)(\sigma) =  \mathop{\otimes}_{i=1}^d \overline{\rho}_0(\gamma_i^{-1} \sigma \gamma_{i}).
\]

\begin{coro}\label{tindu}
Let $n,m,k \in \N$, $a \in \Z$ and $\ell > kn! + 1$ be a prime. Let $K/\Q$ be a finite extension of degree $d$ such that $m^d=n$, $\overline{\rho}_0:G_K \rightarrow \GL_m(\overline{\F}_\ell)$ be a Galois representation and $\overline{\rho} = \TInd^{G_\Q}_{G_K} \overline{\rho}_0$. If $\overline{\chi}_\ell^a \otimes \overline{\rho}$ is regular with tame inertia weights at most $k$, then $K/\Q$ does not ramify at $\ell$.
\end{coro}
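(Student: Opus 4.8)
The plan is to argue exactly as in the proof of Corollary~\ref{indu} (that is, of \cite[Proposition~3.4]{ADW14}), with ordinary induction replaced by tensor induction. We may assume $d\ge 2$ (otherwise $K=\Q$ and there is nothing to prove). Suppose, for a contradiction, that $K/\Q$ ramifies at $\ell$, and fix a prime $\lambda$ of $K$ above $\ell$ with ramification index $e:=e_\lambda\ge 2$. Since $e\le[K_\lambda:\Q_\ell]\le d\le m^d=n$ (here $m\ge 2$, so $d<2^d\le m^d$), we have $e!\mid n!$. Because $\overline{\chi}_\ell^a\vert_{I_\ell}$ is a tamely ramified character, tensoring with it merely multiplies every character on the diagonal of $\overline{\rho}\vert_{I_\ell}$ by one fixed character; hence, by Lemma~\ref{inert} applied to $\overline{\chi}_\ell^a\otimes\overline{\rho}$, it is enough to exhibit two \emph{distinct} characters on the diagonal of $\overline{\rho}\vert_{I_\ell}$ whose $n!$-th powers coincide. (We use throughout that every continuous representation of a closed subgroup $\Gamma$ of $I_\ell$ over $\overline{\F}_\ell$ is triangularizable, since $\Gamma$ has a normal pro-$\ell$ subgroup — its wild inertia, which acts unipotently in characteristic $\ell$ — with procyclic quotient; for the same reason the characters on any such diagonal have order prime to $\ell$, i.e.\ factor through the tame quotient of $\Gamma$.)

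First I would restrict $\overline{\rho}=\TInd_{G_K}^{G_\Q}\overline{\rho}_0$ to $I_\ell$ via the Mackey formula for the restriction of a tensor induced representation: for subgroups $H,L$ of a group $G$ one has $\TInd_H^G(W)\vert_L\cong\bigotimes_{LgH}\TInd^{L}_{L\cap gHg^{-1}}({}^gW)$, the tensor product running over the double cosets $L\backslash G/H$. With $G=G_\Q$, $H=G_K$ and $L=I_\ell$, these double cosets correspond to the primes of $K$ above $\ell$ (the prime $\lambda$ contributing $f_\lambda$ of them), and the factor attached to $\lambda$ is, up to conjugacy in $I_\ell$, the representation $P:=\TInd_{I_\lambda}^{I_\ell}\!\bigl(\overline{\rho}_0\vert_{I_\lambda}\bigr)$, where $I_\lambda\le I_\ell$ is the inertia group of $K_\lambda/\Q_\ell$, of index $e$; the dimensions match since $\sum_\lambda e_\lambda f_\lambda=d$ gives $\prod_\lambda(m^{e_\lambda})^{f_\lambda}=m^d=n$. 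Thus $\overline{\rho}\vert_{I_\ell}\cong P\otimes Q$ for some representation $Q$ of $I_\ell$, and the characters on the diagonal of $\overline{\rho}\vert_{I_\ell}$ are exactly the products of those of $P$ with those of $Q$. It therefore suffices to find two characters $\chi\ne\chi'$ on the diagonal of $P$ with $\chi/\chi'$ of order dividing $n!$; multiplying both by a common diagonal character of $Q$ then produces the required pair for $\overline{\rho}\vert_{I_\ell}$.

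Next I would locate the collision inside $P$. Let $N$ be the normal core of $I_\lambda$ in $I_\ell$, so that $\overline{G}:=I_\ell/N$ embeds in the symmetric group $S_e$ and acts transitively on the $e$ cosets of $I_\lambda$ in $I_\ell$; applying the Mackey formula once more gives $P\vert_N\cong\bigotimes_{\delta\in I_\ell/I_\lambda}{}^{\delta}\!\bigl(\overline{\rho}_0\vert_{I_\lambda}\bigr)\vert_N$. Triangularize $\overline{\rho}_0\vert_{I_\lambda}$ with diagonal characters $\mu_1,\dots,\mu_m$; these have order prime to $\ell$, so they factor through the tame quotient of $I_\lambda$, and conjugation by any element of $I_\ell$ is trivial there (the tame quotient of $I_\ell$ being abelian). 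Hence ${}^{\delta}\mu_j$ and $\mu_j$ agree on $N$ for all $\delta,j$, and the characters on the diagonal of $P\vert_N$ are the characters $n\mapsto\prod_{\delta}\mu_{j(\delta)}(n)$ of $N$, indexed by the maps $j\colon I_\ell/I_\lambda\to\{1,\dots,m\}$, each depending only on the multiset $\{\,j(\delta):\delta\,\}$. Since $m\ge 2$ and $e\ge 2$ there is a non-constant $j$, and transitivity of $\overline{G}$ forces its $\overline{G}$-orbit to have size $\ge 2$; all maps in that orbit yield the same character $\theta$ of $N$, which therefore occurs with multiplicity $\ge 2$ on the diagonal of $P\vert_N$. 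Comparing with the diagonal of $P$ as an $I_\ell$-representation (restricted to $N$), there are two positions $i\ne i'$ on it whose characters $\chi_i,\chi_{i'}$ both restrict to $\theta$ on $N$; then $\chi_i/\chi_{i'}$ is trivial on $N$, hence factors through $\overline{G}$, and so has order dividing $|\overline{G}|$, which divides $e!\mid n!$. Finally $\chi_i\ne\chi_{i'}$: if they were equal, $\overline{\rho}\vert_{I_\ell}\cong P\otimes Q$ would have a repeated character on its diagonal, contradicting regularity, which in particular forces the characters on the diagonal of $\overline{\rho}\vert_{I_\ell}$ to be pairwise distinct (the tame inertia weights being all distinct). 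Thus $\chi_i^{n!}=\chi_{i'}^{n!}$ with $\chi_i\ne\chi_{i'}$, contradicting Lemma~\ref{inert}; this completes the argument.

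The step I expect to cost the most effort is essentially organizational: writing down the Mackey formula for tensor induction together with the accompanying local bookkeeping (the orbit description of $\lambda$ inside $I_\ell\backslash G_\Q/G_K$ and the identification of the factor $P$) in a clean way, and checking that the normal core $N$ behaves as stated — in particular that $I_\ell$-conjugation is trivial on the tame quotient of $N$, which is the one point at which the distinction between tame and wild ramification at $\ell$ enters, and where the bound $e\le n$ (needed so that $|\overline{G}|$ divides $n!$) is used. The remaining ingredients — triangularizability of mod-$\ell$ representations of (subgroups of) $I_\ell$, and the fact that a finite quotient of $I_\ell$ embedding in $S_e$ has order dividing $e!$ — are elementary.
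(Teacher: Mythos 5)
Your argument is correct, and its essential mechanism is the same as the paper's: restrict the tensor induction to a normal subgroup of $I_\ell$ on which it becomes an honest tensor product of $I_\ell$-conjugate factors, use the commutator/tame-abelianity observation ($\sigma^{-1}\tau\sigma\tau^{-1}\in I_{\ell,w}$, so conjugate characters agree modulo wild inertia) to extract a repeated diagonal character on that subgroup, and then push the repetition back to a pair of distinct diagonal characters of $\overline{\rho}\vert_{I_\ell}$ with equal $n!$-th powers, contradicting Lemma~\ref{inert}. The organizational difference is where you choose to restrict: the paper passes directly to $I_\Lambda$, the inertia group at a prime of the Galois closure $\widetilde{K}$ above $\ell$ (picking $\sigma\in I_\ell\setminus G_{\widetilde{K}}$ and $\gamma$ with $\sigma\gamma G_K\neq\gamma G_K$, so that the factors of $\overline{\rho}(\tau)=\bigotimes_i{}^{\gamma_i}\overline{\rho}_0(\tau)$, $\tau\in I_\Lambda$, at the positions $\gamma$ and $\sigma\gamma$ have the same eigenvalues, forcing a repetition), whereas you first invoke the Mackey formula for $\TInd$ to isolate the factor $P=\TInd_{I_\lambda}^{I_\ell}(\overline{\rho}_0\vert_{I_\lambda})$ at the ramified prime $\lambda$, and then restrict $P$ to the normal core $N$ of $I_\lambda$ in $I_\ell$. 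The two choices are interchangeable ($N\supseteq I_\Lambda$, and each gives a quotient of $I_\ell$ whose order divides $n!$); if anything, your Mackey packaging is tidier, since it makes explicit both the index bound $e!\mid n!$ and the final lifting step, from a collision on the subgroup to a pair of genuinely distinct characters of $I_\ell$ with equal $n!$-th powers, which the paper's proof treats rather implicitly.
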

\begin{proof}
The proof is adapted from \cite[Proposition 3.4]{ADW14}. Then we start by assuming that $K/\Q$ ramifies at $\ell$ and we arrive to a contradiction.

Let $V_0$ be the $\overline{\F}_\ell$-vector space underlying $\overline{\rho}_0$. For all $\gamma\in G_\Q$, we define 
\[
^\gamma \overline{\rho}_0:G_{\gamma(K)} \longrightarrow \GL(V_0)
\]
by $^\gamma \overline{\rho}_0(\sigma)=\overline{\rho}_0(\gamma^{-1} \sigma \gamma)$. 
Let $\Lambda$ be a fixed prime of the Galois closure $\widetilde{K}$ of $\Q$ above $\ell$, $I_\Lambda \subseteq G_{\widetilde{K}}$ be the inertia group at the prime $\Lambda$ and $P_{\ell} \subseteq I_\ell$ be the wild inertia group at $\ell$.
Let $\sigma \in I_\ell$ and $\tau \in I_\Lambda$. Since $I_\ell / P_{\ell}$ is pro-cyclic, we have that the commutator $\sigma^{-1} \tau \sigma \tau^{-1}$ belongs to $P_{\ell}$, then ${^\gamma \overline{\rho}_0(\sigma^{-1} \tau \sigma \tau^{-1})} \in {^\gamma \overline{\rho}_0 (P_{\ell})}$. Moreover, since $I_\Lambda \subseteq I_\ell$ is normal, we have that $\sigma^{-1} \tau \sigma \in I_\Lambda \subseteq G_{\widetilde{K}} \subseteq G_{\gamma(K)}$, so $^\gamma \overline{\rho}_0 (\sigma^{-1} \tau \sigma)$ and $^\gamma\overline{\rho}_0(\tau^{-1})$ belong to ${^\gamma \overline{\rho}_0 (I_{\Lambda})}$.
Now, choose a basis such that the image of $I_{\Lambda}$ is contained in the upper triangular matrices. Then, $^\gamma \overline{\rho}_0 (\sigma^{-1} \tau \sigma)$ and $^\gamma \overline{\rho}_0(\tau^{-1})$ are upper triangular matrices and their product $^\gamma \overline{\rho}_0 (\sigma^{-1} \tau \sigma) ^\gamma \overline{\rho}_0(\tau^{-1}) = {^\gamma \overline{\rho}_0(\sigma^{-1} \tau \sigma \tau^{-1})}$, which is contained in the image of the wild inertia, is a upper triangular matrix such that the entries on its main diagonal are all 1. 
Thus, the elements on the main diagonal of $^\gamma \overline{\rho}_0 (\sigma^{-1} \tau \sigma)$ and $^\gamma \overline{\rho}_0(\tau^{-1})$ are reciprocal, so $^\gamma \overline{\rho}_0 (\sigma^{-1} \tau \sigma)$ and $^\gamma \overline{\rho}_0(\tau)$ have exactly the same eigenvalues.

As we are assuming that $\tilde{K}/\Q$ ramifies at $\ell$, we can pick $\sigma \in I_\ell \setminus G_{\widetilde{K}}$, and as $ \widetilde{K} = \prod _{\gamma \in G_\Q} \gamma(K)$, there exists some $\gamma \in G_\Q$ such that $\sigma \notin G_{\gamma(K)}$. This implies that $\overline{\rho}_0 (\sigma \gamma)(V_0) \cap \overline{\rho}_0(\gamma)(V_0)=0$.
Let $\{\gamma_1, \ldots, \gamma_d \}$ be a full set of left-coset representatives of $G_K$ in $G_\Q$ with $\gamma_1 = \gamma$ and $\gamma_2 = \sigma \gamma$. 
As $\tau \in I_\Lambda \subseteq G_{\widetilde{K}}$, we have that 
\[
\overline{\rho}(\tau) = \mathop{\otimes}^d_{i=1} {^{\gamma_i} \overline{\rho}_0(\tau)},
\]
where one factor is $^\gamma \overline{\rho}_0(\tau)$ and another factor is $^{\sigma \gamma} \overline{\rho}_0(\tau) = {^\gamma \overline{\rho}_0(\sigma^{-1} \tau \sigma)}$. 
Let $\mu_1, \ldots , \mu_m$ be the eigenvalues of $^\gamma \overline{\rho}_0(\tau)$ and $\mu'_1 , \ldots , \mu'_m$ be those of $^{\gamma} \overline{\rho}_0(\sigma^{-1} \tau \sigma)$. Then the eigenvalues of $^{\gamma} \overline{\rho}_0(\tau) \otimes {^\gamma \overline{\rho}_0(\sigma^{-1} \tau \sigma)}$ are $\{\mu_i \mu'_j: i,j=1,\ldots ,m \}$.
On the other hand, by Lemma \ref{inert} we have that the $n!$-powers of the characters on the diagonal of $\overline{\chi}_\ell^a \otimes \overline{\rho} \vert _{I_\ell}$ are all different, which implies that the characters on the diagonal of $\overline{\rho} \vert_{I_\Lambda}$ are all different. Thus $^{\gamma} \overline{\rho}_0(\tau)$ and $^{\gamma} \overline{\rho}_0(\sigma^{-1} \tau \sigma)$ cannot have the same eigenvalues for all $\tau \in I_\Lambda$. Then we have a contradiction.
\end{proof}


\section{Maximally induced representations}\label{sec:5}

Let $n=2m$ be an even integer and $p,q > n$ be distinct odd primes such that the order of $q$ mod $p$ is $n$. Denote by $\Q_{q^n}$ the unique unramified extension of $\Q_q$ of degree $n$ and recall that $\Q^{\times}_{q^n} \simeq \mu_{q^n-1} \times U_1 \times q^{\Z}$, where $\mu_{q^n-1}$ is the group of $(q^n-1)$-th roots of unity and $U_1$ is the group of 1-units. 

\begin{defi}\label{char}
Let $p$, $q$ be primes as above and $\ell$ be a prime distinct from $p$ and $q$. A character 
\[
\chi_q : \Q^{\times}_{q^n} \longrightarrow \overline{\Q}^{\times}_\ell
\] 
is of $S$-\emph{type} (resp. of $O$-\emph{type}) \emph{and order} $p$ if satisfies the following conditions:
\begin{enumerate}
\item $\chi_q$ has order $2p$ (resp. $p$),
\item $\chi_q \vert _{\mu_{q^n-1} \times U_1}$ has order $p$, and
\item $\chi_q (q) = -1$ (resp. $\chi_q (q) = 1$). 
\end{enumerate}
\end{defi}

Note that a character of $S$-type or $O$-type and order $p$ is tame. Recall that a character of $\Q^\times_{q^n}$ is \emph{tame} if it is trivial on $U_1$. By local class field theory we can regard $\chi_q$ as a character (which by abuse of notation we call also $\chi_q$) of $G_{\Q_{q^n}}$ or of $W_{\Q_{q^n}}$. Then we can define the Galois representation 
\[
\rho_q := \Ind^{G_{\Q_q}}_{G_{\Q_{q^n}}}(\chi_q).
\] 

\begin{lemm}\label{irred}
Let $\chi_q$ be a character of $S$-type (resp. of $O$-type) and order $p$. Then the representation $\rho_q$ is irreducible and symplectic (resp. orthogonal) in the sense that it can be conjugated to take values in   $\Sp_n(\overline{\Q}_\ell)$ (resp. $\SO_n(\overline{\Q}_\ell)$).
 Moreover, if $\alpha: G_{\Q_q} \rightarrow \overline{\Q}^\times_\ell$ is an unramified character, then the residual representation $\overline{\rho}_q \otimes \overline{\alpha}$ is also irreducible. 
\end{lemm}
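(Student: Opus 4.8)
The plan is to obtain the three assertions — irreducibility of $\rho_q$, the existence and the type of the invariant form, and irreducibility after reduction and unramified twist — from Mackey theory for the normal subgroup $G_{\Q_{q^n}}\trianglelefteq G_{\Q_q}$, whose quotient $\Gal(\Q_{q^n}/\Q_q)$ is cyclic of order $n$ generated by $\Frob_q$, together with the elementary fact that $\mathrm{ord}_p(q)=n$ exhibits $\langle q\rangle$ as a cyclic subgroup of $(\Z/p\Z)^\times$ of order exactly $n$. First I would prove irreducibility: by Mackey's irreducibility criterion for a representation induced from a normal subgroup with cyclic quotient, $\rho_q$ is irreducible if and only if the $n$ conjugate characters $\chi_q^{\Frob_q^{\,i}}$, $0\le i\le n-1$, are pairwise distinct. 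Since $\chi_q$ is tame, the second condition in Definition~\ref{char} says that $\chi_q|_{\mu_{q^n-1}}$ has exact order $p$ (which makes sense because $p\mid q^n-1$); via local class field theory, $\chi_q^{\Frob_q^{\,i}}$ restricts on $\mu_{q^n-1}$ to $(\chi_q|_{\mu_{q^n-1}})^{q^{i}}$, and two such restrictions coincide precisely when $q^i\equiv q^j\pmod p$, i.e.\ when $i\equiv j\pmod n$ because $\mathrm{ord}_p(q)=n$. Hence the conjugates are distinct and $\rho_q$ is irreducible.

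Next I would establish self-duality and read off the type of the invariant form. Since $\langle q\rangle\le(\Z/p\Z)^\times$ is cyclic of even order $n$, its unique element of order $2$ is the unique involution $-1$ of $(\Z/p\Z)^\times$, so $q^{n/2}\equiv-1\pmod p$; consequently $\chi_q^{\Frob_q^{\,n/2}}$ and $\chi_q^{-1}=\chi_q^\vee$ agree on $\mu_{q^n-1}$, agree trivially on $U_1$, and agree on $q$ (as $\Frob_q$ fixes $q$ and $\chi_q(q)\in\{\pm1\}$ is its own inverse), so $\chi_q^\vee=\chi_q^{\Frob_q^{\,n/2}}$ and $\rho_q^\vee=\Ind(\chi_q^\vee)\cong\Ind(\chi_q)=\rho_q$. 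Being irreducible and self-dual, $\rho_q$ preserves a nondegenerate bilinear form $B$, unique up to scalar, which is symmetric or alternating. To decide which, write $n=2m$, let $F'=\Q_{q^m}$ be the fixed field of $\Frob_q^{\,m}$, and factor the induction in stages as $\rho_q=\Ind_{G_{F'}}^{G_{\Q_q}}\psi_q$ with $\psi_q:=\Ind_{G_{\Q_{q^n}}}^{G_{F'}}\chi_q$: the two-dimensional representation $\psi_q$ of $G_{F'}$ is irreducible (as $\chi_q^{\Frob_q^{\,m}}\ne\chi_q$) and self-dual (as $\chi_q^{\Frob_q^{\,m}}=\chi_q^\vee$), and since inducing a symmetric (resp.\ alternating) invariant form yields a symmetric (resp.\ alternating) one and the invariant form of $\rho_q$ is unique up to scalar, the type of $B$ coincides with that of the invariant form of $\psi_q$. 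For $\psi_q$, attached to the unramified quadratic extension $\Q_{q^n}/F'$, a short computation in a basis $\{e,\sigma\cdot e\}$ with $\sigma\in G_{F'}$ lifting the generator of $\Gal(\Q_{q^n}/F')$ (and using $\chi_q\cdot\chi_q^{\sigma}=1$) shows that this form is symmetric exactly when $\chi_q(\sigma^2)=1$ and alternating exactly when $\chi_q(\sigma^2)=-1$; and $\sigma^2\in G_{\Q_{q^n}}$ is a Frobenius element, hence corresponds under $\Art_{\Q_{q^n}}$ to a uniformizer, so $\chi_q(\sigma^2)=\chi_q(q)^{\pm1}=\chi_q(q)$. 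Thus $B$ is alternating for $\chi_q$ of $S$-type and symmetric for $\chi_q$ of $O$-type, placing the image of $\rho_q$ in the symplectic (resp.\ orthogonal) group; in the symplectic case this is $\Sp_n(\overline{\Q}_\ell)$ since $\Sp_n\subseteq\SL_n$, and in the orthogonal case $\det\rho_q$ is read off the same matrix model to identify the relevant subgroup.

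For the last assertion, an unramified $\overline\alpha$ restricts on $G_{\Q_{q^n}}$ to a character that is trivial on inertia — in particular on $\mu_{q^n-1}$ — and is $\Frob_q$-invariant since $\alpha$ is defined on all of $G_{\Q_q}$; as $\ell\ne p$, reduction modulo $\ell$ preserves the order of $\chi_q|_{\mu_{q^n-1}}$, so $(\overline\chi_q\cdot\overline\alpha)|_{\mu_{q^n-1}}$ again has order $p$. Writing $\overline\rho_q\otimes\overline\alpha=\Ind_{G_{\Q_{q^n}}}^{G_{\Q_q}}(\overline\chi_q\cdot\overline\alpha|_{G_{\Q_{q^n}}})$, the Mackey argument of the first step applies verbatim — the conjugates already differ on $\mu_{q^n-1}$ — and gives the irreducibility of $\overline\rho_q\otimes\overline\alpha$. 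The two Mackey computations and the self-duality check are routine; I expect the main obstacle to be the symmetric-versus-alternating dichotomy, where one must isolate the correct quadratic subextension $\Q_{q^n}/\Q_{q^m}$ and handle the local class field theory normalizations carefully enough to see that it is precisely the value $\chi_q(q)\in\{\pm1\}$ — the $S$-type versus $O$-type distinction — that governs the type of $B$.
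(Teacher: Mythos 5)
Your argument for irreducibility of $\rho_q$ and of $\overline\rho_q\otimes\overline\alpha$ is the same Mackey-criterion argument the paper gives (conjugate characters differ on $\mu_{q^n-1}$ because $\chi_q$ has order $p$ there and $\mathrm{ord}_p(q)=n$, and this survives an unramified twist and reduction mod $\ell\ne p$), just with the bookkeeping spelled out. Where you genuinely diverge is the symplectic/orthogonal dichotomy: the paper simply observes that $\chi_q$ is tame with $\chi_q|_{\Q_{q^m}^\times}$ of order $2$ (resp.\ trivial) and then cites Theorem~1 of Moy~\cite{moy}, whereas you give a self-contained computation — first proving self-duality from $q^{n/2}\equiv-1\pmod p$, then inducting in stages through the quadratic subextension $\Q_{q^n}/\Q_{q^m}$, checking that the two-dimensional $\psi_q$ is irreducible and self-dual, reading off the type of its invariant form from the value $\chi_q(\sigma^2)=\chi_q(q)$ in an explicit $\{e,\sigma e\}$ basis, and invoking the standard fact that induction preserves the type of a non-degenerate invariant form together with uniqueness of the form on the irreducible self-dual $\rho_q$. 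This buys you independence from Moy's classification at the cost of a longer argument; all the steps are correct, and the two-dimensional computation is the right way to see the dichotomy from scratch. One small caveat worth keeping in mind: neither your sketch nor the paper's proof actually verifies $\det\rho_q=1$ in the $O$-type case (you only say it can be ``read off''); a short transfer/sign-character computation shows $\det\rho_q$ is the unramified quadratic character there, so the more careful statement is that $\rho_q$ lands in $\mathrm{O}_n(\overline\Q_\ell)$ rather than $\SO_n(\overline\Q_\ell)$ — this is an imprecision in the lemma as stated, not a gap specific to your approach.
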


\begin{proof}
As the order of $\chi_q$ restricted to the inertia group at $q$ is $p$ and the order of $q$ mod $p$ is $n$, the characters $\chi_q, \chi_q^q, \ldots, \chi_q^{q^{n-1}}$ are all distinct. Then $\rho_q$ is irreducible. 
Moreover, since $\chi_q$ is tame and $\chi_q \vert _{q^\Z}$ is of order 2 or trivial (according to whether $\chi_q$ is of $S$-type or $O$-type and order $p$), Theorem 1 of \cite{moy} implies that $\rho_q$ is symplectic or orthogonal. 

Let $\overline{\chi}_q$ (resp. $\overline{\alpha}$) be the composite of $\chi_q$ (resp. $\alpha$) with the projection $\overline{\Z}_\ell \twoheadrightarrow \overline{\F}_\ell$. Note that the image of the reduction of $\rho_q$ in $\GL_n(\overline{\F}_\ell)$ is $\Ind^{G_{\Q_q}}_{G_{\Q_{q^n}}} (\overline{\chi}_q)$, which is an irreducible representation.
Since $\overline{\alpha}$ is unramified, the order of the restriction of $\overline{\chi}_q \otimes (\overline{\alpha} \vert _{\Q_{q^n}})$ to the inertia group at $q$ is $p$. Then as the order of $q$ mod $p$ is $n$, the $n$ characters $(\overline{\chi}_q \otimes (\overline{\alpha} \vert _{\Q_{q^n}})), (\overline{\chi}_q \otimes (\overline{\alpha} \vert _{\Q_{q^n}}))^q,\ldots, (\overline{\chi}_q \otimes (\overline{\alpha} \vert _{\Q_{q^n}}))^{q^{n-1}}$ are different which implies the irreducibility of $\overline{\rho}_q \otimes \overline{\alpha} = \Ind^{G_{\Q_q}}_{G_{\Q_{q^2}}}(\overline{\chi}_q \otimes \overline{\alpha} \vert_{G_{\Q_{q^n}}})$.
\end{proof}

\begin{defi}\label{minduced}
Let $p$, $q$ and $\ell$ be primes as above. We say that a Galois representation 
\[
\rho: G_\Q \longrightarrow \GL_n(\overline{\Q}_\ell)
\]
is \emph{maximally induced of }$S$-\emph{type} (resp. \emph{of} $O$-\emph{type}) \emph{at} $q$ \emph{of order} $p$ if the restriction of $\rho$ to a decomposition group at $q$ is equivalent to $\Ind^{G_{\Q_q}}_{G_{\Q_{q^n}}} (\chi_q) \otimes\alpha$, where $\chi_q$ is a character of $S$-type (resp. of $O$-type) and order $p$ and $\alpha : G_{\Q_q} \rightarrow \overline{\Q}^\times_\ell$ is an unramified character.
\end{defi}

Note that if $\chi_q$ is a character of $S$-type (resp. of $O$-type) and order $p$, then the image $G_q$ of $\overline{\rho}_q$  is homomorphic to a non-abelian extension of $\Z / n\Z$ by $\Z/2p\Z$ (resp. by $\Z/p\Z$) such that $\Z/n\Z$ acts faithfully on $\Z/p\Z \subseteq \Z/2p\Z$. Then we have the following result.

\begin{lemm}\label{npg} 
Let $\ell$ be a prime different from $p$ and $q$. Then every irreducible representation of $G_q$ over $\overline{\F}_\ell$ has dimension 1 or dimension at least $n$. 
\end{lemm}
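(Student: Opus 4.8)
The plan is to analyze the group $G_q = \mathrm{Im}(\overline{\rho}_q)$ directly via its structure as an extension and apply Clifford theory relative to the normal abelian subgroup coming from the inertia. Recall from the discussion preceding the statement that $G_q$ is a non-abelian extension
\[
1 \longrightarrow N \longrightarrow G_q \longrightarrow \Z/n\Z \longrightarrow 1,
\]
where $N$ is cyclic of order $2p$ (in the $S$-type case) or $p$ (in the $O$-type case), and the quotient $\Z/n\Z$ acts faithfully on the $\Z/p\Z$-part of $N$; this faithfulness is exactly the condition that $q$ has order $n$ mod $p$, forcing the characters $\chi_q, \chi_q^q, \dots, \chi_q^{q^{n-1}}$ of $N$ to be pairwise distinct. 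Since $\ell \neq p, q$, the order of $G_q$ is prime to $\ell$, so $\overline{\F}_\ell[G_q]$ is semisimple and ordinary representation theory (Clifford theory, Frobenius reciprocity) applies verbatim over $\overline{\F}_\ell$; in particular every irreducible $\overline{\F}_\ell$-representation of $G_q$ sits inside $\mathrm{Ind}_{N}^{G_q}(\psi)$ for some character $\psi$ of $N$.

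First I would let $\sigma$ be an irreducible $\overline{\F}_\ell$-representation of $G_q$ and restrict it to the abelian normal subgroup $N$. By Clifford's theorem, $\sigma|_N$ is a direct sum of $G_q$-conjugates of a single character $\psi\colon N \to \overline{\F}_\ell^\times$, each appearing with the same multiplicity $e$, so $\dim \sigma = e \cdot |G_q\text{-orbit of }\psi|$. The orbit size divides $[G_q:N] = n$. Next I would distinguish two cases according to whether $\psi$ is trivial on the $\Z/p\Z$-subgroup of $N$. If $\psi$ is trivial on $\Z/p\Z$: in the $O$-type case $N=\Z/p\Z$ so $\psi$ is trivial, $\sigma$ factors through $G_q/N \cong \Z/n\Z$ which is abelian, hence $\dim\sigma = 1$; in the $S$-type case $\psi$ factors through $N/(\Z/p\Z) \cong \Z/2\Z$ which is central in $G_q/(\Z/p\Z)$ (an abelian-by-the-trivial-action observation), and again $\sigma$ factors through an abelian quotient, giving $\dim \sigma = 1$. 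If instead $\psi$ is nontrivial on $\Z/p\Z$: here I use that $\Z/n\Z$ acts faithfully on $\Z/p\Z$, so the stabilizer in $\Z/n\Z$ of any nontrivial character of $\Z/p\Z$ is trivial; consequently the $G_q$-orbit of $\psi$ has size exactly $n$, forcing $\dim\sigma \geq n$. Assembling these cases yields the dichotomy: every irreducible $\overline{\F}_\ell$-representation of $G_q$ has dimension $1$ or dimension at least $n$.

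I expect the main obstacle to be bookkeeping in the $S$-type case: one must check carefully that the subgroup $\Z/p\Z \subseteq \Z/2p\Z = N$ is normal in $G_q$ (which it is, being characteristic in the abelian normal subgroup $N$), that the induced action of $\Z/n\Z$ on it is still faithful, and that when $\psi$ is nontrivial on $\Z/p\Z$ the orbit genuinely has full size $n$ rather than a proper divisor — this is where the hypothesis "order of $q$ mod $p$ equals $n$" is doing the real work, via the faithfulness of the conjugation action. A clean way to see the orbit size is to note that the characters $\psi, {}^g\psi, {}^{g^2}\psi, \dots$ for $g$ a lift of a generator of $\Z/n\Z$ restrict on $\Z/p\Z$ to $\psi_0, \psi_0^{q}, \dots, \psi_0^{q^{n-1}}$ which are pairwise distinct precisely because $q$ has order $n$ mod $p$, exactly mirroring the argument in Lemma \ref{irred}. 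Everything else is a routine application of semisimplicity and Clifford theory, valid over $\overline{\F}_\ell$ since $|G_q|$ is prime to $\ell$.
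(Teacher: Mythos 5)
Your proof is correct and follows essentially the same approach as the paper: restrict an irreducible representation to the normal abelian subgroup sitting over $\Z/p\Z$, note the restriction decomposes as a sum of characters (since $\ell\neq p$), and split into the case where all those characters are trivial on $\Z/p\Z$ (giving a factorization through an abelian quotient, hence dimension $1$) and the case where a nontrivial character of $\Z/p\Z$ appears (where faithfulness of the $\Z/n\Z$-action forces all $n$ conjugates to appear, hence dimension $\geq n$). The paper writes this out only for the $O$-type case and cites Lemma 2.1 of \cite{KLS} for the $S$-type case, whereas you treat both uniformly via Clifford theory, which is a clean improvement. One small inaccuracy worth flagging: the claim that ``$\ell\neq p,q$ implies $|G_q|$ is prime to $\ell$'' is not literally true, since $|G_q|$ is divisible by $n$ (or a divisor of $n$) and the lemma places no constraint ruling out $\ell\mid n$; fortunately your argument never actually uses full semisimplicity of $\overline{\F}_\ell[G_q]$ --- Clifford's theorem and semisimplicity of the restriction to $N$ hold whenever $\ell\nmid|N|$, i.e.\ $\ell\neq 2,p$, which is all you need, exactly as in the paper.
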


\begin{proof}
The proof is adapted from Lemma 2.1 of \cite{KLS} where the case $\chi_q$ of $S$-type and order $p$ is dealt. Then we can assume that $\chi_q$ is of $O$-type and order $p$. In this case we have the following exact sequence
\[
0 \longrightarrow \Z/p\Z \longrightarrow G_q \longrightarrow \Z / n\Z \longrightarrow 0
\]
with $\Z/n\Z$ acting on $\Z/p\Z$ faithfully. Note that the restriction of any irreducible representation of $G_q$ to $\Z/p\Z$ is a direct sum of characters because $\ell$ is different from $p$. If every character is trivial, then the original representation factors through $\Z/n\Z$ which is abelian, so 1-dimensional.

Otherwise, a non-trivial character $\chi$ of $\Z/p\Z$ appears. Then, every character obtained by composing $\chi$ with an automorphism of $\Z/p\Z$ coming from the action of $\Z/n\Z$ likewise appears. As there are $n$ such characters, the original representation must have degree at least $n$.   
\end{proof}

\begin{lemm}\label{tens} 
Let $\ell$ be a prime different from $p$ and $q$. Then every faithful $n$-dimensional representation of $G_q$ over $\overline{\F}_\ell$ is tensor-indecomposable.
\end{lemm}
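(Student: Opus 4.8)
The plan is to derive Lemma~\ref{tens} from Lemma~\ref{npg} together with two elementary facts about tensor products. Write $\rho\colon G_q\to\GL(V)$ for the given faithful representation, with $\dim_{\overline{\F}_\ell}V=n$. The first step is to observe that $\rho$ is automatically \emph{irreducible}: decomposing $V=\bigoplus_i W_i$ into irreducible $G_q$-subrepresentations, Lemma~\ref{npg} forces each $\dim W_i$ to be $1$ or at least $n$; if some summand has dimension $\geq n$ it coincides with $V$ and $\rho$ is irreducible, while otherwise every $W_i$ is one-dimensional, so $\rho$ factors through the abelianization of $G_q$, contradicting faithfulness (recall $G_q$ is non-abelian, as $\Z/n\Z$ acts faithfully, hence non-trivially, on the normal subgroup $\Z/p\Z$).

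Next, suppose for contradiction that $V\cong V_1\otimes_{\overline{\F}_\ell}V_2$ as $G_q$-representations with $n_i:=\dim V_i\geq2$ for $i=1,2$. If $V_1$ were reducible, say $V_1=A\oplus B$ with $A,B\neq 0$, then $V=(A\otimes V_2)\oplus(B\otimes V_2)$ would be a non-trivial $G_q$-stable decomposition of $V$, contradicting the irreducibility just established; hence $V_1$, and likewise $V_2$, is irreducible. Applying Lemma~\ref{npg} to $V_1$ and $V_2$ (using $n_i\geq 2>1$) gives $n_1,n_2\geq n$, whence
\[
n=\dim V=n_1n_2\geq n^2,
\]
which is absurd since $n\geq 2$. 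Therefore no such decomposition exists and $\rho$ is tensor-indecomposable. Essentially all the content is carried by Lemma~\ref{npg}; the only points needing a word of justification are that a faithful $n$-dimensional representation of $G_q$ is irreducible and that a tensor factor of an irreducible representation is irreducible, so I do not expect a real obstacle here.

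The genuinely delicate point is the refinement used later to rule out Aschbacher's class $\mathcal{C}_4$, namely that $G_q$ does not preserve a tensor decomposition of $V$ even \emph{projectively}; there the argument above does not apply verbatim because the tensor factors are only projective representations. I would handle this by restricting to the normal cyclic subgroup $N\cong\Z/p\Z$ of $G_q$: a projective representation of $G_q$ restricts to a genuine one of the cyclic group $N$, so $\rho\vert_N\cong(V_1\vert_N)\otimes(V_2\vert_N)$ with $\dim V_i=n_i<n$. Since some element of $G_q$ conjugates $N$ by an automorphism of order $n$ and $V$ is a genuine faithful $G_q$-representation, the multiset of characters of $N$ occurring in $\rho\vert_N$ is a single orbit of size $n$ under that automorphism, hence multiplicity-free with no trivial constituent; on the other hand the same conjugation forces the multiset of $N$-weights of each $V_i$ (normalized to lie in $\Z/p\Z$) to be invariant under an affine map $x\mapsto cx+t$ of $\Z/p\Z$ with $c$ of order $n$, and since $n_i<n$ such a multiset must be constant, i.e.\ $N$ acts on each $V_i$ by a scalar, hence on $V$ by a scalar — contradicting that $\rho\vert_N$ is multiplicity-free. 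Keeping careful track of the central ambiguity of the projective factors $V_i$ is the only bookkeeping that requires care.
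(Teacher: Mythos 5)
Your approach matches the paper's: both derive tensor-indecomposability from Lemma~\ref{npg}, by arguing that a tensor factorization with factors of dimension strictly between $1$ and $n$ is impossible since each factor is forced to be irreducible and hence of dimension $\geq n$. You in fact spell out a step the paper states without justification, namely that the tensor factors may be taken to be irreducible representations of $G_q$.

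There is, however, a small gap in your first step. To show that a faithful $n$-dimensional representation is irreducible you "decompose $V=\bigoplus_i W_i$ into irreducible $G_q$-subrepresentations", which tacitly invokes Maschke's theorem and therefore needs $\ell\nmid|G_q|$. Since $|G_q|$ divides $2pn$ and the hypothesis only excludes $\ell=p,q$, this can fail (for instance when $\ell\mid 2n$). The conclusion "all $1$-dimensional constituents $\Rightarrow$ factors through the abelianization" is also false without semisimplicity. The repair is short and does not use semisimplicity: if every composition factor of $V$ were $1$-dimensional then, in a suitable basis, $\rho(G_q)$ would lie in the upper-triangular Borel of $\GL_n(\overline{\F}_\ell)$, so $\rho$ would send the derived subgroup $G_q'$ (which contains the normal $\Z/p\Z$, because $\Z/n\Z$ acts nontrivially on it) into the unipotent radical, an $\ell$-group; that contradicts faithfulness since $p\neq\ell$. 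Alternatively, restrict to the normal $\Z/p\Z$ and run the Clifford-type argument you sketch for the projective case: $\rho\vert_{\Z/p\Z}$ is multiplicity-free on a single $\Z/n\Z$-orbit of nontrivial characters, so any nonzero $G_q$-subspace must be all of $V$. Your closing remark about ruling out a merely \emph{projective} tensor decomposition is a legitimate concern for how the lemma is applied to Aschbacher's class $\mathcal{C}_4$, where the group only acts on the tensor factors up to scalars, but that issue lies outside the statement of the lemma itself.
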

\begin{proof}
Assume that there exists a faithful $n$-dimensional representation of $G_q$ which is tensor-decomposable.
Then it can be written as a tensor product $\rho_1 \otimes \cdots \otimes \rho_h$ of irreducible representations of $G_q$ over $\overline{\F}_\ell$ of dimension greater than $1$ but smaller than $n$. So by Lemma \ref{npg} we obtain a contradiction.
\end{proof}


\section{Study of the images I (geometric cases)}\label{sec:6}

In this section we will give a criterion to know when a totally odd polarizable compatible system $\mathcal{R} = \{ \rho_\ell \}_\ell$ of $n$-dimensional Galois representations, which are maximally induced of $S$-type or $O$-type at $q$ of order $p$ for an ``appropriate" couple of primes $(p, q)$, is such that the image of $\overline{\rho}_\ell$ cannot be contained in a maximal subgroup of ``geometric type" of an $n$-dimensional symplectic or orthogonal group for almost all $\ell$. 

In order to be more precise we need to explain what we mean by a maximal subgroup of geometric type of a finite almost simple classical group.
Recall that an \emph{almost simple group} is a group $H$ such that $S \unlhd H \leqslant \Aut(S)$ for some non-abelian simple group $S$.
Note that as the Galois representations $\rho_\ell$ in $\mathcal{R}$ are totally odd polarized, we can ensure that the image of $\rho_\ell$ lies in an orthogonal or symplectic group, then it is enough for our purposes to study the maximal subgroups of $\GSp_n(\F_{\ell^r})$ and $\GO^\pm_n(\F_{\ell^r})$.  Such subgroups were classified essentially by Aschbacher in \cite{As84}, but see also Main Theorem in Chapter 3 of \cite{KL90} and Main Theorem 2.1.1 of \cite{BHR13} for a more precise formulation of such classification. 

Let $\ell$ be an odd prime and $n,r \in \N$ with $n$ even. Let $G$ be a maximal subgroup of $\GSp_n(\F_{\ell^r})$ (resp. of $\GO^\pm_n(\F_{\ell^r})$) which does not contain $\Sp_n(\F_{\ell^r})$ (resp. $\Omega^\pm_n(\F_{\ell^r})$). If $n \geq 6$ (resp. $n \geq 10$), at least one of the following holds:
\begin{enumerate}
\item $G$ stabilizes a totally singular or a non-singular subspace; 
\item $G$ stabilizes a decomposition $V = \oplus_{i=1}^t V_i$, $\dim(V_i) = n / t$;
\item $G$ stabilizes an extension field of $\F_{\ell^r}$ of prime index dividing $n$;
\item $G$ stabilizes a tensor product decomposition $V = V_1 \otimes V_2$;
\item $G$ stabilizes a decomposition $V = \otimes_{i=1}^t V_i$, $\dim(V_i)=a$, $n=a^t$; 
\item $G$ normalizes an extraspecial or a symplectic type group; 
\item $G$ stabilizes a subfield of $\F_{\ell^r}$ of prime index; or
\item $G$ is of class $\mathcal{S}$.
\end{enumerate}
 
Recall that a maximal subgroup $G$ of $\GSp_n(\F_{\ell^r})$ (resp. of $\GO^\pm_n(\F_{\ell^r})$) is called of \emph{of class} $\mathcal{S}$ if all of the following holds:
\begin{enumerate}
\item $\mbox{P}G$ is almost simple;
\item $G^\infty$ acts absolutely irreducible;
\item  $G^\infty$ is not conjugated to a group defined over a proper subfield of $\F_{\ell^r}$;
\item $G$ does not contain $\Sp_{n}(\F_{\ell^r})$ (resp. $\Omega_n^\pm(\F_{\ell^r})$).
\end{enumerate}
  
We refer the reader to the subsection of Notation, in the Introduction of this paper, for the definition of $G^{\infty}$. 
  
\begin{rema} Note that the definition of a group of class $\mathcal{S}$ is slightly weaker that the classical definition (see Definition 2.1.3 of \cite{BHR13} and $\S 1.2$ of \cite{KL90}). However, as we assume $\ell$ odd, according to Table 4.8.A of \cite{KL90}, both definitions are equivalent. 
\end{rema}  

  \begin{defi}\label{geot}
We will say that a maximal subgroup $G$ of $\GSp_n(\F_{\ell^r})$ or $\GO^\pm_n(\F_{\ell^r})$ is of \emph{geometric type} if this lies in one of the first six cases of Aschbacher's classification.
\end{defi}
 
We remark that this definition is weaker than the classical definition which also considers the maximal subgroups lying in case vii) (of the previous classification) as of geometric type. However, it is enough for our purposes because, as we will see in Section \ref{sec:8}, we are not interested in excluding these kinds of groups. 

We refer the reader to $\S 2.2$ of \cite{BHR13} and Chapter 4 of \cite{KL90} for the rest of relevant definitions concerning to Aschbacher's classification.
In order to be able to use the previous classification of maximal subgroups of $\GSp_n(\F_{\ell^r})$ (resp. of $\GO^\pm_n(\F_{\ell^r})$), henceforth, we will assume that $\ell$ is a odd prime and $n\geq 6$ (resp. $n \geq 10$).

Finally, before to state our main result, we give a basic lemma which explains what we mean by an appropriate couple of primes.

\begin{lemm}\label{prim}
Let $k,n,N \in \N$ with $n$ even and $M$ be an integer greater than $17$, $n$, $N$, $kn!+1$, and all primes dividing $2\prod_{i=1}^{m}(2^{2i} -1)$ if $n=2^m$ for some $m \in \N$. Let $L_0$ be the compositum of all number fields of degree smaller that or equal to $n!$ which are ramified at most at the primes smaller than or equal to $M$. Then we can choose two different primes $p$ and $q$ such that: 
\begin{enumerate}
\item $p \equiv 1 \mod n$,
\item $p$ and $q$ are greater than $M$,
\item $q$ splits completely in $L_0$, and
\item $q^{n/2} \equiv -1 \mod p$. 
\end{enumerate}
\end{lemm}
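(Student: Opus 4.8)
The plan is to produce $p$ first, then $q$, using Dirichlet's theorem on primes in arithmetic progressions together with a Chebotarev-style splitting argument for the condition on $q$. First I would secure the prime $p$: the conditions on $p$ are (i) $p \equiv 1 \bmod n$, (ii) $p > M$, and implicitly (via condition (iv) on $q$) that $p$ be chosen so that $-1$ is an $(n/2)$-th power but not an $n$-th power in $\F_p^\times$, equivalently that $\F_p^\times$ contains an element of order exactly $n$ whose $(n/2)$-th power is $-1$. Since $p \equiv 1 \bmod n$ already guarantees $n \mid p-1$, the cyclic group $\F_p^\times$ has an element $g$ of order $n$; then $g^{n/2}$ is the unique element of order $2$, namely $-1$. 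So in fact \emph{every} prime $p \equiv 1 \bmod n$ with $p > M$ works, and infinitely many such $p$ exist by Dirichlet. Fix one.

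Next I would handle $q$. Consider the field $L := L_0(\zeta_p)$, a finite Galois extension of $\Q$. A prime $q \nmid \mathrm{disc}(L/\Q)$ is completely split in $L_0$ and has prescribed Frobenius class in $\Gal(\Q(\zeta_p)/\Q) \cong \F_p^\times$ according to $q \bmod p$. The condition $q^{n/2} \equiv -1 \bmod p$ says precisely that the image of $q$ in $\F_p^\times$ is an element whose order is exactly $n$ (it squares to something of order $2$, i.e. $-1$, hence has order $n$; and conversely any element of order $n$ has $(n/2)$-th power of order $2$, which must be $-1$), which is a nonempty union of conjugacy classes — in fact a single coset condition — in the abelian group $\Gal(\Q(\zeta_p)/\Q)$. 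Since $L_0$ and $\Q(\zeta_p)$ are linearly disjoint over $\Q$ when $p > M$ — because $p$ exceeds every prime ramifying in $L_0$, so $\Q(\zeta_p)/\Q$ is ramified only at $p$ while $L_0/\Q$ is unramified at $p$ — the condition ``$q$ split in $L_0$'' and the condition ``$\mathrm{Frob}_q$ has order $n$ in $\F_p^\times$'' are independent. By Chebotarev's density theorem applied to $L/\Q$, the set of primes $q$ satisfying both (and automatically $q > M$, $q \neq p$, $q$ unramified) has positive density, hence is infinite; pick one. This $q$ then satisfies all four conditions, and by construction $p \neq q$.

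The main obstacle — really the only subtle point — is the linear disjointness of $L_0$ and $\Q(\zeta_p)$, which is where the hypothesis ``$M$ greater than $17$, $n$, $N$, $kn!+1$ and all primes dividing $2\prod_{i=1}^m(2^{2i}-1)$ when $n = 2^m$'' is used: the clause $p > M$ forces $p$ to be unramified in $L_0$ (since $L_0$ is ramified only at primes $\le M < p$), so $\Q(\zeta_p) \cap L_0 = \Q$. The more delicate-looking numerical constraints on $M$ (the bound $17$, the primes dividing $2\prod(2^{2i}-1)$) are not needed for \emph{this} lemma per se but are imposed here so that the primes $p, q$ produced are simultaneously usable in the later group-theoretic arguments (ensuring, e.g., that $\PSp_n(\F_{\ell^s})$-type images are not accidentally small exceptional groups, and that certain tensor-induced subfield obstructions vanish); for the present statement one simply carries them along. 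Finally, the degree bound in the definition of $L_0$ (fields of degree $\le n!$) is exactly what is needed so that $q$ being split in $L_0$ will later force, via Corollaries \ref{indu} and \ref{tindu}, the relevant auxiliary extensions to be unramified at $q$ — but that is an application downstream, not part of proving the lemma.
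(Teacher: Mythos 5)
Your proof is correct and takes essentially the same route as the paper, which in fact disposes of the lemma in two sentences (Dirichlet for $p\equiv 1\bmod n$, $p>M$; then Chebotarev for $q$). You usefully supply the detail the paper leaves implicit, namely that $L_0$ and $\Q(\zeta_p)$ are linearly disjoint over $\Q$ because $p>M$ forces $p$ to be unramified in $L_0$, so the two conditions on $q$ can be imposed simultaneously. One small slip in a parenthetical remark: $q^{n/2}\equiv -1\bmod p$ does \emph{not} in general mean that $q$ has order exactly $n$ in $\F_p^\times$; it only forces the $2$-part of the order of $q$ to equal the $2$-part of $n$ (e.g.\ for $n=6$, $q\equiv -1\bmod p$ already satisfies $q^{3}\equiv -1$ while having order $2$). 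This does not affect the validity of your proof of the lemma as stated, since the relevant set $\{x\in\F_p^\times : x^{n/2}=-1\}$ is a nonempty coset of the subgroup of $(n/2)$-th roots of unity and Chebotarev applies to it directly; but it is worth noting that if one later needs the order of $q$ modulo $p$ to be exactly $n$ (as Section~4 of the paper assumes), one should explicitly restrict to the smaller, still nonempty, set of elements of order exactly $n$ in $\F_p^\times$ — which Chebotarev handles equally well.
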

\begin{proof}
First, choose a prime $p$ greater than $M$ and such that $p \equiv 1 \mod n$. Then Chevotarev's Density Theorem allows us to choose a prime $q>M$ (from a set of positive density) which splits completely in $L_0$ and such that $q^{n/2}\equiv -1 \mod p$. 
\end{proof}

The main result of this paper is the following:

\begin{theo}\label{rt}
Let $k$, $n$, $N$, $M$, $p$, $q$ and $L_0$ as in Lemma \ref{prim}. Let $\mathcal{R} = \{ \rho_\ell \}_\ell$ be a totally odd polarizable compatible system of Galois representations $\rho_\ell: G_\Q \rightarrow \GL_n(\overline{\Q}_\ell)$ such that for every prime $\ell$, $\rho_\ell$ ramifies only at the primes dividing $Nq\ell$. 
Assume that for every $\ell>kn!+1$ a twist of $\overline{\rho}_\ell$ by some power of the cyclotomic character is regular with tame inertia weights at most $k$ and  that for all $\ell \neq p,q$, $\rho_\ell$ is maximally induced of $S$-type (resp. of $O$-type) at $q$ of order $p$.
Then for all primes $\ell$ different from $2$, $p$ and $q$, the image of $\overline{\rho}_\ell$ cannot be contained in a maximal subgroup of $\GSp_n(\F_{\ell^r})$  (resp. of $\GO^\pm_n(\F_{\ell^r})$) of geometric type.
\end{theo}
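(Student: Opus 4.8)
The plan is to combine the local information at $q$ (coming from the maximally induced structure) with Aschbacher's classification of maximal subgroups of finite almost simple classical groups, using the geometric/arithmetic hypotheses (regularity, boundedness of tame inertia weights, controlled ramification) to kill all the ``imprimitive'' Aschbacher classes. Fix $\ell \neq p,q$. By Lemma \ref{irred}, the restriction $\overline{\rho}_\ell|_{D_q}$ is, up to the unramified twist $\alpha$, an induced representation $\Ind^{G_{\Q_q}}_{G_{\Q_{q^n}}}(\overline{\chi}_q)$ which is irreducible; hence $\overline{\rho}_\ell$ itself is irreducible, and by Lemma \ref{irred} it is (conjugate to something) valued in $\GSp_n$ or $\GO_n$ according to type, with the symplectic/orthogonal pairing coming from the polarization $\vartheta_\ell$. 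So $G := \mathrm{Im}(\overline{\rho}_\ell^{\proj})$ is an irreducible subgroup of $\PGSp_n(\overline{\F}_\ell)$ or $\PGO_n(\overline{\F}_\ell)$; by a Brauer-character/finite-field argument it lies in $\PGSp_n(\F_{\ell^s})$ or $\PGO_n(\F_{\ell^s})$ for some $s$. The goal is to show $G$ is almost simple, equivalently (by Aschbacher) that $G$ does not land in a subgroup of geometric type $\mathcal{C}_1$--$\mathcal{C}_8$ nor in a subgroup of class $\mathcal{S}$ that is not already almost simple of the relevant kind; more precisely we must rule out classes $\mathcal{C}_1,\dots,\mathcal{C}_7$, which then forces $G$ into class $\mathcal{C}_8$ (the classical subgroup itself) or $\mathcal{S}$, and in both of those cases $G$ is almost simple with socle either a classical group or a simple group in $\mathcal{S}$.

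The key steps, in order: \textbf{(1) Irreducibility and the symplectic/orthogonal structure}, as above, placing $G$ inside $\PGSp_n(\F_{\ell^s})$ or $\PGO_n(\F_{\ell^s})$. \textbf{(2) Primitivity ($\mathcal{C}_2$) and non-tensor-induced ($\mathcal{C}_7$).} If $\overline{\rho}_\ell$ were imprimitive, it would be $\Ind^{G_\Q}_{G_K}\overline{\rho}_0$ for a proper subfield $K$ with $[K:\Q]=d\mid n$, $d>1$; restricting to $D_q$ and comparing with the known decomposition of $\Ind^{G_{\Q_q}}_{G_{\Q_{q^n}}}(\overline{\chi}_q)$ over $D_q$ shows $K$ must be unramified at $q$ and have degree $d \le n$, but then Corollary \ref{indu} (after twisting by the appropriate power of $\overline{\chi}_\ell$, using regularity and tame weights at most $k$, and $\ell > kn!+1$) forces $K/\Q$ unramified at $\ell$; since $\overline{\rho}_\ell$ is unramified outside $Nq\ell$ and $q$, $K$ is then a field of degree $\le n!$ ramified only at primes $\le M$ (using $N,q$... — here one must be slightly careful, since $q>M$; the point is that the inductive field cannot be ramified at $q$ because locally at $q$ the representation is genuinely $n$-dimensionally induced from the degree-$n$ unramified extension, not from a smaller one — this uses that the $n$ characters $\overline{\chi}_q^{q^i}$ are distinct, exactly as in Lemma \ref{irred}), hence $K \subseteq L_0$ and $q$ splits completely in $K$, contradicting that $\overline{\rho}_\ell|_{D_q}$ is $n$-dimensionally induced and irreducible. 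The tensor-induced case $\mathcal{C}_7$ is excluded identically using Corollary \ref{tindu} together with Lemma \ref{tens} (no faithful $n$-dimensional rep of $G_q$ is tensor-induced, since its tensor factors would have dimension strictly between $1$ and $n$). \textbf{(3) Tensor-decomposable ($\mathcal{C}_4$):} ruled out by Lemma \ref{tens} applied to $\overline{\rho}_\ell|_{D_q}$, whose image is $G_q$ (twisted), which admits no nontrivial tensor decomposition. \textbf{(4) Reducible-stabilizer ($\mathcal{C}_1$):} excluded by irreducibility from step (1). \textbf{(5) Field-extension and subfield classes ($\mathcal{C}_3$, $\mathcal{C}_5$):} $\mathcal{C}_5$ (realizable over a subfield) is harmless since we only claim $G$ lands in $\PGSp_n(\F_{\ell^s})$ / $\PGO_n(\F_{\ell^s})$ for \emph{some} $s$ and almost-simplicity is insensitive to shrinking the field; $\mathcal{C}_3$ (semilinear over an extension field $\F_{\ell^{sd}}$ of degree $d\mid n$) would make $\overline{\rho}_\ell|_{G_{\Q(\zeta)}}$ decompose, and restricting to $D_q$ again contradicts the irreducibility of the niveau-$n$ induced local representation — concretely, $\mathcal{C}_3$ forces the existence of an index-$d$ subgroup on which the representation splits, which combined with the local structure at $q$ (order of $q$ mod $p$ is exactly $n$) is impossible. \textbf{(6) Classes $\mathcal{C}_6$ (normalizer of a symplectic-type $r$-group) and the exceptional $\mathcal{C}_8$/small $\mathcal{S}$ overlaps:} $\mathcal{C}_6$ only occurs when $n=r^m$ is a prime power and the ambient group is of bounded order divisible only by small primes; this is exactly why $M$ was chosen larger than all primes dividing $2\prod_{i=1}^m(2^{2i}-1)$ when $n=2^m$ — such a $\mathcal{C}_6$ subgroup of $\mathrm{Sp}_n$ or $\mathrm{O}_n$ has order involving only those primes, and since $p\mid |G|$ (as $\overline{\rho}_\ell|_{D_q}$ has an element of order $p$ in its image) with $p>M$, $G$ cannot be contained in it. \textbf{(7) Conclusion:} having excluded $\mathcal{C}_1$--$\mathcal{C}_7$, Aschbacher's theorem leaves $G$ inside a $\mathcal{C}_8$-subgroup (one of $\PGSp_n$, $\PGO^\pm_n$, $\PO^\pm_n$, etc., all almost simple) or in class $\mathcal{S}$, where by definition the preimage is quasisimple so $G$ is almost simple. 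In all surviving cases $G$ is almost simple, which is the claim.

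\textbf{The main obstacle} I expect is step (6) — cleanly dispatching classes $\mathcal{C}_6$ and the genuinely exceptional small-dimensional overlaps between $\mathcal{C}_8$ and $\mathcal{S}$ where the numerology matters. The divisibility-by-$p$ argument with $p>M$ handles $\mathcal{C}_6$ and most of $\mathcal{S}$ provided one knows the relevant group orders, which is why the hypothesis on $M$ is engineered around $2\prod_{i=1}^m(2^{2i}-1)$; making this uniform over all even $n$ (not just $n=2^m$) and checking that no sporadic almost simple group in class $\mathcal{S}$ survives with order divisible by an arbitrarily large prime $p \equiv 1 \bmod n$ requires invoking the classification of low-degree representations of quasisimple groups — but since $p$ can be taken arbitrarily large and $p \mid |G|$, any $\mathcal{S}$-subgroup of bounded order is excluded, and the unbounded ones are precisely the almost simple classical groups we want. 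A secondary subtlety is the bookkeeping in steps (2)/(5): one must verify that the local representation at $q$, being induced from the \emph{full} degree-$n$ unramified extension with $q$ of order exactly $n$ mod $p$, genuinely obstructs every $\mathcal{C}_2,\mathcal{C}_3,\mathcal{C}_7$ configuration, and not merely those defined over $\Q$; this is where Corollaries \ref{indu} and \ref{tindu}, together with the completeness of the splitting of $q$ in $L_0$, do the work, but the argument must be run carefully to cover intermediate fields that are a priori only ramified at $\ell$ and $q$.
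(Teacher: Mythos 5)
Your proof is correct and follows essentially the same route as the paper: rule out the geometric Aschbacher classes one by one using Lemma \ref{irred} for irreducibility, the ``unramified at $q$ and $\ell$, hence contained in $L_0$, hence $q$ splits completely'' argument (via Corollaries \ref{indu}, \ref{tindu}) for the imprimitive/field-extension/tensor-induced classes, Lemma \ref{tens} for tensor decompositions, and the $p > M$ divisibility constraint for the extraspecial normalizer. Two small inaccuracies worth noting: your ``main obstacle'' paragraph worries about ruling out sporadic $\mathcal{S}$-class subgroups, but that is unnecessary here since Theorem \ref{rt} only asserts almost-simplicity and every $\mathcal{S}$-class subgroup already qualifies (that refinement belongs to Theorem \ref{rib}); and $\mathcal{C}_8$ is empty for the symplectic and orthogonal cases, so the ``surviving'' alternatives in your step (7) are really just ``$G$ contains the full classical socle'' or ``$G$ is of class $\mathcal{S}$'', which is exactly how the paper phrases it.
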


\begin{rema}\label{remi} In the orthogonal case we assume $n \geq 10$ because: $\Omega^{\pm}_2(\F_{\ell^r}) \cong \Z_{(\ell^r \mp 1)/(2,\ell^r-1)}$, $\Omega^+_4(\F_{\ell^r}) \cong \SL_2(\F_{\ell^r}) \circ \SL_2(\F_{\ell^r})$, $\Omega^-_4 (\F_{\ell^r}) \cong \PSL_2(\F_{\ell^{2r}})$, $\Omega_6^+(\F_{\ell^r}) \cong \SL_4(\F_{\ell^r})/ \langle I_4 \rangle$, $\Omega_6^-(\F_{\ell^r}) \cong \SU_4(\F_{\ell^r})/ \langle I_4 \rangle$ (see Proposition 2.9.1 of \cite{KL90} and $\S 1.10$ of \cite{BHR13}) and $\Aut(\POM^+_8(\F_{\ell^r})) \neq \POMO^+_8(\F_{\ell^r})$, where $\Gamma \mbox{O}^+_8(\F_{\ell^r})$ denotes the group of all semi-isometries of $\F_{\ell^r}^8$ with the standard symmetric pairing of positive type (see the remarks after Theorem 1.2.1 of \cite{KL90} and Tables 8.50 and 8.51 of \cite{BHR13}). Then Aschbacher's classification does not apply in these cases. In the symplectic case we assume $n \geq 6$ because the cases $\PGL_2(\F_{\ell^r})$ and $\PGSp_4(\F_{\ell^r})$ have been studied in \cite{DW11} and \cite{DZ16} respectively.
\end{rema}

Now, we are ready to give the proof of Theorem \ref{rt}, which will be given by showing that $G_\ell= \mbox{Im}(\overline{\rho}_\ell)$ is not contained in any subgroup lying in cases i)-vi) of Aschbacher's classification.
In the rest of this section $\Gamma_q$ will denote the image of $\overline{\rho}_\ell \vert_{D_q}$.


\subsection{Reducible cases} \label{sec:61}

Let $V$ be the space underlying $\overline{\rho}_\ell$. Suppose that $G_\ell$ is a subgroup of a maximal subgroup in case i) of Aschbacher's classification, then $G_\ell$ stabilizes a proper non-zero totally singular or a non-degenerate subspace of $V$. Therefore $G_\ell$ does not act irreducibly on $V$. But, according to Lemma \ref{irred}, if $\ell \neq p,q$, $G_\ell$ acts irreducibly on $V$. Hence, if we assume $\ell$ different from $p$ and $q$, $\overline{\rho}_\ell$ cannot be reducible.


\subsection{Imprimitive and field extension cases}\label{sec:62}

If $G_\ell$ is an irreducible subgroup of a maximal subgroup in cases ii) and iii) of Aschbacher's classification, then there exists a normal subgroup $H_\ell$ of index at most $n!$ of $G_\ell$ such that
\[
1 \longrightarrow H_\ell \longrightarrow G_\ell \longrightarrow U_\ell \longrightarrow 1,
\]
where $U_\ell$ is isomorphic to a subgroup of $S_t$, the symmetric group of $t$ elements ($1 < t \leq n$), and $H_\ell$ is reducible (not necessarily over $\F_{\ell^r}$). See $\S 2.2.2$ and $\S 2.2.3$ of \cite{BHR13}.

Let $L$ be the Galois extension of $\Q$ corresponding to $H_\ell$. Note that as $\overline{\rho}_\ell(I_q)$ has order $p$ and $(p,n!)=1$, $L/\Q$ is unramified at $q$. Then from the ramification of $\overline{\rho}_\ell$ we have that $L$ is unramified outside $\{ \ell, p_1, \ldots, p_w \}$, where $p_1, \ldots, p_w$ are the primes smaller than or equal to the bound $M$.
If $\ell > kn!+1$ and different from $p$ and $q$, it follows from Corollary \ref{indu} that $L$ is unramified at $\ell$ and if $\ell \leq kn!+1$, then $\ell \in \{p_1, \ldots, p_w \}$. Then in both cases  $L$ is contained in $L_0$. This implies that $q$ splits completely in $L$ and therefore $\Gamma_q$ is contained in $H_\ell$ for all primes $\ell$ different from $p$ and $q$, and according to Lemma \ref{irred}, $H_\ell$ should be absolutely irreducible. Then we have a contradiction.
 

\subsection{Tensor product cases}\label{sec:63}

Now assume that $G_\ell$ is a subgroup of a maximal subgroup in the case iv) of Aschbacher's classification, then the representation $\overline{\rho}_\ell$ can be written as a tensor product $\overline{\rho}_1 \otimes \overline{\rho}_2$ of two representations $\overline{\rho}_1$ and $\overline{\rho}_2$ with $\dim(\overline{\rho}_i) < n$ for $i=1,2$. Then as $G_\ell$ contain $\Gamma_q$ for all primes $\ell$ different from $p$ and $q$, we have that the restriction of $\overline{\rho}_\ell$ to $D_q$ arises from the tensor product of two representations over $\overline{\F}_\ell$ of dimension greater than 1 and smaller than $n$. But, by Lemma \ref{tens} we have that this restriction is tensor-indecomposable. Then we have a contradiction.


\subsection{Tensor induced cases}\label{sec:64}

Similarly to $\S$ \ref{sec:62}, if $G_\ell$ is an irreducible subgroup of a maximal subgroup in case v) of Aschbacher's classification, then there exists a normal subgroup $T_\ell$ of index at most $n!$ of $G_\ell$ such that
\[
1 \longrightarrow T_\ell \longrightarrow G_\ell \longrightarrow U_\ell \longrightarrow 1,
\]
where $U_\ell$ is isomorphic to a subgroup of $S_t$, $1 < t \leq n$, and $T_\ell$ is tensor-decomposable. See $\S 2.2.7$ of \cite{BHR13}.

Let $L$ be the Galois extension of $\Q$ corresponding to $T_\ell$. From the ramification of $\overline{\rho}_\ell$ we have that $L$ is unramified outside $\{ \ell, q, p_1, \ldots, p_w \}$, where $p_1, \ldots, p_w$ are the primes smaller than or equal to the bound $M$. Note that, since $\overline{\rho}_\ell(I_q)$ has order $p$ and $(p,n!)=1$, $L/\Q$ is unramified at $q$. Moreover, if $\ell > kn!+1$ and different from $p$ and $q$, it follows from Corollary \ref{tindu} that $L$ is unramified at $\ell$ and if $\ell \leq kn!+1$, then $\ell \in \{p_1, \ldots, p_w \}$. Thus $L$ is contained in $L_0$ which implies that $q$ splits completely in $L$ and therefore $\Gamma_q$ is contained in $T_\ell$ for all primes $\ell$ different from $p$ and $q$. So Lemma \ref{tens} implies that $T_\ell$ is tensor-indescomposable, so we have a contradiction.
 

\subsection{Extraspecial cases} \label{sec:65}

Recall that a $2$-group $R$ is called \emph{extraspecial} if its center $Z(R)$ is cyclic of order $2$ and the quotient $R/Z(R)$ is a non-trivial elementary abelian $2$-group. 
For any integer $m>0$ there are two types of extraspecial groups of order $2^{1+2m}$. We write $2^{1+2m}_+$ for the extraspecial group of order $2^{1+2m}$ which is isomorphic to a central product of $m$ copies of $D_8$ and we write $2_-^{1+2m}$ for the extraspecial group of the same order but  that is isomorphic to a central product of $m-1$ copies of $D_8$ and one of $Q_8$. 

Now suppose that $G_\ell$ is a subgroup of a maximal subgroup in case vi) of Aschbacher's classification. First, observe that according to Table 3.5.F of \cite{KL90} there are no subgroups of $\GO^-_n(\F_{\ell^r})$ belonging to this case, then we can assume that $G_\ell$ is either a subgroup of $\GO^+_n(\F_{\ell^r})$ or a subgroup of $\GSp_n(\F_{\ell^r})$. Moreover, according to Table 3.5.E of loc. cit.,  $G_\ell$ lies in this case only if $n=2^m$, $r=1$, $\ell \geq 3$; and $G_\ell \subseteq N_{\GO^+_n(\F_{\ell^r})}(R)$ or $G_\ell \subseteq N_{\GSp_n(\F_{\ell^r})}(R)$, where $R$ is an absolutely irreducible $2$-group of type $2_-^{1+2m}$ or of type $2_+^{1+2m}$.
From (4.6.1) of \cite{KL90} we have that the projective image $\mbox{P}G_\ell$ of $G_\ell$ in $\PGO^+_n(\F_{\ell^r})$ or in $\PGSp_n(\F_{\ell^r})$ is isomorphic to a subgroup of $C_{\Aut(R)}(Z(R))$. Then from Table 4.6.A of loc. cit., we have that $\mbox{P}G_\ell \subseteq 2^{2m}. \mbox{O}^+_{2m}(\F_2)$ (of order $2^{m^2+m+1}(2^m-1) \prod_{i=1}^{m-1} (2^{2i}-1)$) or $\mbox{P}G_\ell \subseteq 2^{2m}. \mbox{O}^-_{2m}(\F_2)$ (of order $2^{m^2+m+1}(2^m+1) \prod_{i=1}^{m-1} (2^{2i}-1)$).
Then $G_\ell$ cannot be a subgroup of a maximal subgroup in case vi) of Aschbacher's classification because we have chosen $p$ not dividing $2 \prod_{i=1}^{m}(2^{2i}-1)$.


\subsection{Conclusion}\label{sec:66}

Having gone through the cases i)-vi) of Aschbacher's classification, we can conclude that, for all odd primes $\ell$ different from $p$ and $q$, the image of $\overline{\rho}_\ell$ cannot be contained in a maximal subgroup of geometric type.


\section{Existence of compatible systems with prescribed local conditions}\label{sec:7}

The goal of this section is to prove the existence of totally odd polarized compatible systems of Galois representations satisfying Theorem \ref{rt} via automorphic representations. We closely follow the arguments in \cite{ADSW14}.

We start with an existence theorem of automorphic representations for split classical groups over $\Q$ which are based on the principle that the local components of automorphic representations at a fixed prime are equidistributed in the unitary dual of a reductive group according to an appropriate measure. More precisely: 

\begin{theo}\label{shins} 
Let $G$ be a split classical group over $\Q$ such that $G(\R)$ has discrete series.
Let $S$ be a finite set of primes and $\hat{U}_p$ be a prescribable subset for each $p \in S$. Then there exist cuspidal automorphic representations $\tau$ of $G(\A_\Q)$ such that 
\begin{enumerate}
\item $\tau_p \in \hat{U}_p$ for all $p\in S$,
\item $\tau$ is unramified at all finite places away from $S$, and
\item $\tau_\infty$ is a discrete series whose infinitesimal character is sufficiently regular.
\end{enumerate}
\end{theo}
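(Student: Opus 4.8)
The plan is to deduce Theorem \ref{shins} from the equidistribution results of Shin \cite{Shi12} (in the form used by Arias-de-Reyna–Dieulefait–Shin–Wiese \cite{ADSW14}), adapting them to allow the prescription of local components at a finite set of places rather than just at a single one. First I would set up the counting machinery: for a split classical group $G/\Q$ with $G(\R)$ admitting discrete series, fix a regular integral infinitesimal character $\xi$ for $G(\R)$ (we will let it vary at the end so that it is ``sufficiently regular''), and consider the family of cuspidal automorphic representations $\tau$ with $\tau_\infty$ in the discrete series $L$-packet of parameter $\xi$. For each level structure given by an open compact subgroup $U^S = \prod_{p \notin S} U_p \subset G(\A_\Q^{S,\infty})$ taken hyperspecial away from $S$, and each choice of open compact $U_p$'s at $p \in S$, the automorphic multiplicity is computed by a trace-formula / Euler–Poincaré argument, and one obtains an asymptotic (as $\xi$ becomes more regular, equivalently as the ``weight'' grows) for the weighted count of such $\tau$ with a fixed local behaviour at the places of $S$. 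This is exactly the content of Theorem 5.8 (and its variants) of \cite{Shi12}; the new point is to carry along the product over all $p \in S$ rather than a single prime.

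Next I would run the standard equidistribution argument. The key input is that the ``automorphic Plancherel measure'' $\widehat{\mu}^{\mathrm{pl}}_{\xi}$ on $\prod_{p \in S} \widehat{G(\Q_p)}$, against which the normalized counting measures converge weakly as $\xi \to \infty$, has full support on the tempered spectrum, and in particular assigns positive mass to any non-empty open subset of the unitary dual — here I would take the prescribable sets $\widehat{U}_p$ to be, say, open subsets meeting the tempered (or at least the $\widehat{G(\Q_p)}$-generic) part of the dual, which is the sense in which ``prescribable'' should be understood, following \cite{ADSW14}. Because the limiting measure is a product measure $\bigotimes_{p \in S} \widehat{\mu}^{\mathrm{pl}}_p$ and each factor has positive mass on $\widehat{U}_p$, the product $\prod_{p \in S}\widehat{U}_p$ has positive limiting mass; hence for all sufficiently regular $\xi$ the count of level-$U^S$ cuspidal $\tau$ with $\tau_p \in \widehat{U}_p$ for every $p \in S$, unramified outside $S$, and $\tau_\infty$ discrete series of parameter $\xi$, is strictly positive. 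Picking such a $\tau$ gives properties i), ii), iii) simultaneously. Conditions ii) (unramified outside $S$) and iii) (discrete series, regular infinitesimal character at $\infty$) are built into the construction, and i) comes from the positivity just established. I would also note, as in \cite{ADSW14}, that one may arrange $\tau$ to be cuspidal rather than merely discrete automorphic by choosing $\xi$ regular enough that the residual and non-tempered contributions are negligible, or by directly invoking the cuspidality statement in Shin's theorem.

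The main obstacle is the passage from ``prescribe one local component'' to ``prescribe finitely many'': one must verify that the trace-formula error terms remain negligible uniformly when test functions are inserted at several finite places at once, and that the limiting measure genuinely factors as a product over $p \in S$ with each factor of full support on the relevant part of the dual. Both are implicit in Shin's formalism — the test function at the finite places enters as a single element of $C_c^\infty\big(\prod_{p\in S} G(\Q_p)\big)$ and the Plancherel/Sauvageot density argument is insensitive to whether this function is a pure tensor over one or several primes — but spelling this out (and fixing the precise meaning of ``prescribable'', so that the $\widehat{U}_p$ are forced to have positive Plancherel mass) is where the real work lies. I expect the write-up to consist mostly of citing the relevant statements of \cite{Shi12} and \cite{ADSW14} and indicating the (routine) modification of the test function, rather than re-proving the trace-formula asymptotics; I would flag explicitly that the constant $\xi$-regularity threshold depends on $S$ and on the $\widehat{U}_p$.
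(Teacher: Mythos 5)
Your proposal identifies the wrong obstruction to applying Shin's Theorem~5.8 directly. Shin's Theorem~5.8 already allows one to prescribe local components at a finite set $S$ of finite places simultaneously; the test function there is an arbitrary element of $C_c^\infty(\prod_{p\in S}G(\Q_p))$ from the start, so there is no ``passage from one prime to finitely many'' that needs to be carried out. The genuine modification that the paper's proof must make is elsewhere: Shin's results are proved under the hypothesis that the center of $G$ is trivial, and this fails for the split classical groups appearing here ($\Sp_{2m}$ and $\SO_{2m}$ have center $\{\pm I\}$). The paper resolves this by fixing a central character and running the trace formula with fixed central character, following Binder \cite{Bin}, to obtain the analogues of Theorem~4.11 and Corollary~4.12 of \cite{Shi12}, after which the argument of Shin's Theorem~5.8 goes through unchanged.

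Your proposal does not touch the center issue at all, so as written it does not actually reduce the statement to anything Shin proves: without the fixed-central-character variant of the trace formula, the automorphic counting input (Shin's Theorem~4.11/Corollary~4.12) is simply not available for $\Sp_{2m}$ or $\SO_{2m}$. The rest of your sketch — the equidistribution against the Plancherel measure, positivity of mass on a prescribable set, and taking the infinitesimal character sufficiently regular to land in the cuspidal spectrum — is a correct account of how Shin's machinery is used once that input is in hand, but it is not the new content of this theorem. You should also be careful with the phrase ``prescribable subset'': it is a precise notion (recalled in Section~3.1 of \cite{ADSW14}, originating in \cite{Shi12}) designed exactly so that the Plancherel mass is positive and the boundary contributes nothing; your informal gloss of it as ``an open set meeting the tempered dual'' is weaker than what is actually required for the limiting argument.
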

For the definiton of prescribable subsets and suficiently regular characters we refer to $\S 3.1$  and $\S 3.2$ of \cite{ADSW14} respectively.
\begin{proof}
This result is the analogue of Theorem 5.8 of \cite{Shi12} except that here we are not assuming that the center of $G$ is trivial.  However, in this case we can fix a central character and apply the trace formula with fixed central character as in Section 3 of \cite{Bin} to deduce the exact analogue of Theorem 4.11 and Corolary 4.12 of \cite{Shi12}. Then our result can be deduced in the same way as in \cite[Theorem 5.8]{Shi12}.
\end{proof}

On the other hand, Arthur has recently classified local and global automorphic representations of symplectic and special orthogonal groups via twisted endoscopy relative to general linear groups \cite{Ar12}. 
For our purpose it suffices to consider the split special orthogonal groups: 
\begin{itemize}
\item $\SO_{2m+1}$ with the natural embedding $\xi : \Sp_{2m}(\C) \rightarrow \GL_{2m}(\C)$, $m \in \N$, and 
\item $\SO_{2m}$ with the natural embedding $\xi' : \SO_{2m}(\C) \rightarrow \GL_{2m}(\C)$, $m\in \N$ even\footnote{This last restriction is because $\SO_{2m}(\R)$ has no discrete series if $m$ is odd.}.
\end{itemize}

Recall that if $v$ is a finite (resp. archimedean) place, $W'_{\Q_v} = W_{\Q_v} \times \SL_2(\C)$ (resp. $W'_{\Q_v} = W_{\Q_v}$) denotes the Weil-Deligne group of $\Q_v$. We will say that an $L$-parameter $\phi_v : W'_{F_v} \rightarrow \GL_{2m}(\C)$ is of \emph{symplectic type} (resp. \emph{orthogonal type}) if it preserves a suitable alternating (resp. symmetric) form on the $2m$-dimensional space, or equivalently, if $\phi_v$  factors through $\xi$ (resp. $\xi'$) possibly after conjugation by an element of $\GL_{2m}(\C)$.

For each local $L$-parameter $\phi_v: W'_{\Q_v} \rightarrow \Sp_{2m}(\C)$ (resp. $\phi'_v: W'_{\Q_v} \rightarrow \SO_{2m}(\C)$), Arthur \cite[Theorem 1.5.1]{Ar12} associates an $L$-packet $\Pi_{\phi_v}(\SO_{2m+1}(\Q_v))$ (resp. $\Pi_{\phi'_v}(\SO_{2m}(\Q_v))$) consisting of finitely many irreducible representations of $\SO_{2m+1}(\Q_v)$ (resp. $\SO_{2m}(\Q_v)$). Moreover, each irreducible representation belongs to the $L$-packet for a unique parameter up to equivalence. If $\phi_v$ (resp. $\phi'_v$) has finite centralizer group in $\Sp_{2m}(\C)$ (resp. $\SO_{2m}(\C)$) so that it is a discrete parameter, then $\Pi_{\phi_v}(\SO_{2m+1}(\Q_v))$ (resp. $\Pi_{\phi'_v}(\SO_{2m}(\Q_v))$) consists only of discrete series. 

Let $\tau$ (resp. $\tau'$) be a discrete automorphic representation of $\SO_{2m+1}(\A_\Q)$ (resp. $\SO_{2m}(\A_\Q)$). Arthur \cite[Theorem 6.1]{Ar14} shows the existence of a self-dual isobaric automorphic representation $\pi$ (resp. $\pi'$) of $\GL_{2m}(\A_\Q)$ which is a functorial lift of $\tau$ (resp. $\tau'$) along the embedding $\xi : \Sp_{2m}(\C) \rightarrow \GL_{2m}(\C)$ (resp. $\xi' : \SO_{2m}(\C) \rightarrow \GL_{2m}(\C)$). 
In the generic case in the sense of Arthur (i.e., when the $\SL_2$-factor in the global $A$-parameter for $\tau$ (resp. $\tau'$) has trivial image) this means that for the unique $\tau_v$ (resp. $\tau'_v$) such that $\tau_v \in \Pi_{\phi_v}(\SO_{2m+1}(\Q_v))$ (resp. $\tau'_v \in \Pi_{\phi_v}(\SO_{2m}(\Q_v))$), we have that $\rec_v(\pi_v) \simeq \xi \circ \phi_v$ (resp. $\rec_v(\pi'_v) \simeq \xi ' \circ \phi'_v$) for all places $v$ of $\Q$.

Let $\rho_q$ (resp. $\rho'_q$) be a representation induced from a character of $S$-type (resp. of $O$-type) and order $p$ as in Section \ref{sec:5}, and WD$(\rho_q)$ (resp. WD$(\rho'_q)$) the associated Weil-Deligne representation which gives rise to a local $L$-parameter $\phi_q$ (resp. $\phi'_q$) for $\GL_{2m}(\Q_q)$. Since $\rho_q$ (resp. $\rho'_q$) is irreducible and symplectic (resp. orthogonal) the parameter $\phi_q$ (resp. $\phi'_q$) factors through $\Sp_{2m} (\C) \subseteq \GL_{2m}(\C)$ (resp. $\SO_{2m} (\C) \subseteq \GL_{2m}(\C)$), possibly after conjugation, and defines a discrete $L$-parameter of $\SO_{2m+1}(\Q_q)$ (resp. $\SO_{2m}(\Q_q)$). Then the $L$-packet $\Pi_{\phi_q}(\SO_{2m+1}(\Q_q))$ (resp. $\Pi_{\phi'_q}(\SO_{2m}(\Q_q))$) consists of finitely many discrete series of $\SO_{2m+1}(\Q_q)$ (resp. $\SO_{2m}(\Q_q)$).

\begin{rema} 
The proof of Arthur's results is still conditional on the stabilization of the twisted trace formula and a few expected technical results in harmonic analysis. However, recently Moeglin and Waldspurger have announced that the proof of Arthur's results is now unconditional (see \cite{MW17a} and \cite{MW17b}).
\end{rema}

\begin{theo}\label{chin}
There exist self-dual cuspidal automorphic representations $\pi$ of $\GL_n(\A_\Q)$ with trivial central character such that 
\begin{enumerate}
\item $\pi$ is unramified outside $q$, 
\item $\rec_q(\pi_q) \simeq \emph{WD}(\rho_q)$ $(\mbox{resp. }\rec_q(\pi_q) \simeq \emph{WD}(\rho'_q))$, and
\item $\pi_\infty$ is of symplectic (resp. orthogonal) type and regular algebraic. 
\end{enumerate} 
\end{theo}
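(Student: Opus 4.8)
The plan is to construct $\pi$ as the functorial transfer of a carefully chosen cuspidal automorphic representation $\tau$ of the split group $G = \SO_{n+1}$ in the $S$-type case (resp. $G = \SO_n$ in the $O$-type case, where $n = 2m$ with $m$ even so that $G(\R)$ has discrete series), and then verify the three required properties using Arthur's classification. First I would set $S = \{q\}$ as the finite set of ramified primes. For the place $q$, the Weil–Deligne representation $\WD(\rho_q)$ (resp. $\WD(\rho'_q)$) is irreducible and symplectic (resp. orthogonal) by Lemma \ref{irred}, so it factors through $\Sp_{2m}(\C)$ (resp. $\SO_{2m}(\C)$) and, having finite (indeed trivial, by irreducibility) centralizer, defines a discrete $L$-parameter $\phi_q$ for $G(\Q_q)$; I would take $\hat U_q := \Pi_{\phi_q}(G(\Q_q))$, a finite set of discrete series, which is a prescribable subset in the sense of \cite{ADSW14}. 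Applying Theorem \ref{shins} to $G$ with this single prescribed local condition yields a cuspidal automorphic representation $\tau$ of $G(\A_\Q)$ which is unramified away from $q$, lies in $\hat U_q$ at $q$, and is a sufficiently regular discrete series at $\infty$.

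Next I would push $\tau$ forward: Arthur's work \cite{Ar12} produces a self-dual isobaric automorphic representation $\pi$ of $\GL_n(\A_\Q)$ lifting $\tau$ along $\xi$ (resp. $\xi'$). The key point is that this lift is \emph{generic} in Arthur's sense, i.e.\ its global $A$-parameter has trivial $\SL_2$-factor: this I would deduce from the fact that $\tau_q$ lies in the $L$-packet of the \emph{discrete} (hence, via Arthur's multiplicity formula, generic-type) parameter $\phi_q$ — a discrete $L$-parameter at one place forces the global parameter to be of the form $\pi_1 \boxplus \cdots \boxplus \pi_r$ with no Arthur $\SL_2$. From this, $\rec_q(\pi_q) \simeq \xi \circ \phi_q = \WD(\rho_q)$ (resp. $\simeq \WD(\rho'_q)$), giving property (ii); property (i) follows since $\tau$ is unramified outside $q$ and unramified local parameters transfer to unramified ones; and property (iii) follows because $\tau_\infty$ is a sufficiently regular discrete series, so $\xi \circ \phi_\infty$ (resp. $\xi' \circ \phi'_\infty$) is regular algebraic and of symplectic (resp. orthogonal) type by construction of $\xi$ (resp. $\xi'$).

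The remaining issue is \emph{cuspidality} of $\pi$ (a priori $\pi$ is only isobaric) and the \emph{trivial central character}. For cuspidality I would argue that the lift $\pi = \pi_1 \boxplus \cdots \boxplus \pi_r$ is cuspidal precisely when $r = 1$; if $r \geq 2$ then $\WD(\rho_q) \simeq \rec_q(\pi_{1,q}) \oplus \cdots \oplus \rec_q(\pi_{r,q})$ would be a nontrivial direct sum, contradicting the irreducibility of $\rho_q$ (resp. $\rho'_q$) from Lemma \ref{irred}. Hence $\pi$ is cuspidal. For the central character, I would note that in Theorem \ref{shins} one is free to fix the central character of $\tau$ (the trace formula with fixed central character of \cite{Bin} is exactly what is invoked in the proof); choosing it trivial and tracking it through the transfer — the central character of the $\GL_n$-lift is determined by that of $\tau$ together with the (trivial, for $\SO$) determinant of $\xi$ — yields $\omega_\pi = 1$. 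I expect the main obstacle to be the bookkeeping around genericity and the precise compatibility of the local Langlands normalization for $\GL_n(\Q_q)$ with Arthur's parametrization at the ramified place $q$, i.e.\ making sure that $\rec_q(\pi_q)$ really equals $\WD(\rho_q)$ on the nose rather than up to an unramified twist; this is handled exactly as in \cite[Section 4]{ADSW14} using the characterization of the functorial lift recalled above, namely $\rec_v(\pi_v) \simeq \xi \circ \phi_v$ for all $v$ in the generic case.
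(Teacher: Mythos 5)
Your proposal is correct and follows essentially the same route as the paper: the paper's proof is a one-line invocation of Theorem \ref{shins} with $S=\{q\}$ and $\hat U_q = \Pi_{\phi_q}(\SO_{2m+1}(\Q_q))$ (resp.\ $\Pi_{\phi'_q}(\SO_{2m}(\Q_q))$) followed by a citation of Theorem 3.4 of \cite{ADSW14}, and your write-up simply unpacks what that citation contains --- Arthur's transfer along $\xi$ (resp.\ $\xi'$), local--global compatibility of the lift, cuspidality of $\pi$ from irreducibility of $\rho_q$, and triviality of the central character. One small remark: for genericity of the global $A$-parameter you argue from discreteness of $\phi_q$ at $q$; the argument in \cite{ADSW14} instead derives it from the sufficiently regular infinitesimal character of $\tau_\infty$ (a nontrivial Arthur $\SL_2$ would force repeated exponents at $\infty$). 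Both routes work and lead to the same conclusion, so this is a cosmetic rather than substantive difference.
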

\begin{proof}
The proof of this theorem is analogous to the proof of Theorem 3.4 of \cite{ADSW14}, i.e., by applying Theorem \ref{shins} with $S=\{ q \}$ and $\hat{U}_q = \Pi_{\phi_q}(\SO_{2m+1}(\Q_q))$ (resp. $\hat{U}_q = \Pi_{\phi'_q}(\SO_{2m}(\Q_q))$). 
\end{proof}

\begin{coro}\label{chidote}
There exist compatible systems as in Theorem \ref{rt}.
\end{coro}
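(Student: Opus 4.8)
The plan is to obtain the compatible system demanded by Theorem \ref{rt} as the one attached by Theorem \ref{raesdc} to the automorphic representation produced in Theorem \ref{chin}; the real content then reduces to a careful bookkeeping of the various constants and a verification of the local conditions, most of it already packaged in the results above.

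First I would fix the even integer $n$, set $N=1$, and \emph{pin down the weight before anything else}: choose a dominant $a=(a_1\ge\cdots\ge a_n)\in\Z^n$ that is regular and sufficiently regular in the sense required by Theorem \ref{shins}, and declare $k:=(a_1-a_n)+(n-1)$, which (as we check below) will bound the relevant tame inertia weights. Feeding this $k$, together with $n$ and $N$, into Lemma \ref{prim} produces the bound $M$ and the pair of primes $p,q$. Only then would I invoke Theorem \ref{chin} with this specific $q$; its proof, going through Theorem \ref{shins}, can be arranged so that the resulting self-dual cuspidal $\pi$ of $\GL_n(\A_\Q)$ (trivial central character, unramified outside $q$, with $\rec_q(\pi_q)\simeq\WD(\rho_q)$ or $\WD(\rho'_q)$, and $\pi_\infty$ regular algebraic of symplectic resp. orthogonal type) has weight exactly $a$. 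Since $\pi$ is self-dual with trivial central character, $(\pi,\mathbf{1})$ is RAESDC, so Theorem \ref{raesdc} attaches to it a totally odd polarizable compatible system $\mathcal{R}(\pi)=\{\rho_{\ell,\iota}(\pi)\}_\ell$ with $\vartheta_\ell=\chi_\ell^{1-n}$.

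Next I would verify the three remaining hypotheses of Theorem \ref{rt} for $\mathcal{R}(\pi)$. (i) \emph{Ramification:} $\pi$ is unramified outside $q$, so each $\rho_{\ell,\iota}(\pi)$ is unramified outside $\{q,\ell\}$, i.e.\ ramifies only at the primes dividing $Nq\ell=q\ell$. (ii) \emph{Tame inertia weights:} for $\ell>kn!+1$ not dividing $q$, the representation $\pi$ is unramified at $\ell$, so by Theorem \ref{raesdc}(ii), (iii) the representation $\rho_{\ell,\iota}(\pi)$ is crystalline at $\ell$ with the $n$ distinct Hodge--Tate weights $\{a_1+(n-1),\dots,a_n\}$, which lie in an interval of length $k<\ell-1$; Fontaine--Laffaille theory then shows that a suitable twist of $\overline{\rho}_{\ell,\iota}(\pi)$ by a power of $\overline{\chi}_\ell$ is regular with tame inertia weights at most $k$, which is exactly what the proof of Theorem \ref{rt} uses. (iii) \emph{Maximal induction:} for $\ell\ne p,q$, local--global compatibility (Theorem \ref{raesdc}(iv)) combined with Theorem \ref{chin}(ii) gives
\[
\iota\,\WD\bigl(\rho_{\ell,\iota}(\pi)\vert_{\Q_q}\bigr)^{F-ss}\;\cong\;\rec_q\bigl(\pi_q\otimes|\det|_q^{(1-n)/2}\bigr)\;\cong\;\WD(\rho_q)\otimes\alpha_0,
\]
where $\alpha_0$ is the unramified character of $W_{\Q_q}$ attached to $x\mapsto|x|_q^{(1-n)/2}$ by $\Art_{\Q_q}$; since $\rho_q$ is tamely ramified and irreducible its monodromy vanishes, and $\ell\ne q$ lets one pass back from Weil--Deligne representations to the genuine restriction, yielding $\rho_{\ell,\iota}(\pi)\vert_{D_q}\cong\Ind^{G_{\Q_q}}_{G_{\Q_{q^n}}}(\chi_q)\otimes\alpha_0$ with $\alpha_0$ unramified, i.e.\ $\rho_{\ell,\iota}(\pi)$ is maximally induced of $S$- (resp.\ $O$-) type at $q$ of order $p$ in the sense of Definition \ref{minduced}. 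All hypotheses being in place, Theorem \ref{rt} concludes.

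The main obstacle is the circular dependence between the weight and the auxiliary prime $q$: the constant $k$ — hence the bound $M$ and the pair $p,q$ output by Lemma \ref{prim} — is determined by the weight of $\pi$, whereas that weight is produced by Theorem \ref{chin}, which already requires $q$. Breaking this loop is the one nontrivial organizational step: it forces one to fix the sufficiently regular weight $a$ first, then run Lemma \ref{prim}, and only afterwards invoke Theorem \ref{chin} with the understanding that its construction can realize the prescribed weight (this is precisely the point where one leans on the refinements of \cite{Shi12} and \cite{ADSW14}). A second, more routine point to be careful about is the local--global comparison at $q$: one must see that the twist by $|\det|_q^{(1-n)/2}$ is unramified so that it is absorbed into $\alpha$, and that the vanishing of the monodromy of $\WD(\rho_q)$ is what legitimates recovering $\rho_{\ell,\iota}(\pi)\vert_{D_q}$ itself, not merely its Weil--Deligne parameter.
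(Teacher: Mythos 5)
Your proof is correct and follows essentially the same route as the paper: produce a suitable self-dual cuspidal $\pi$ via Theorem \ref{chin}, attach the compatible system via Theorem \ref{raesdc} with the trivial character, and verify the hypotheses of Theorem \ref{rt} using Fontaine--Laffaille theory at $\ell$ and local--global compatibility at $q$. Where you genuinely improve on the paper's exposition is the circular-dependence point: as written, the paper's proof first fixes $\pi$ (hence $q$) via Theorem \ref{chin}, \emph{then} reads off the Hodge--Tate weights to define $k$, and only at the end says ``taking $p$ and $q$ as in Lemma \ref{prim}'' — but Lemma \ref{prim} needs $k$ as input to produce $q$. Your re-ordering (fix a sufficiently regular weight $a$ in advance, set $k = (a_1 - a_n) + (n-1)$, run Lemma \ref{prim} to get $p,q$, then invoke Theorem \ref{chin} with the prescribed weight) is the correct way to read the argument, and rightly leans on the fact that Theorem \ref{shins} allows one to prescribe the infinitesimal character at $\infty$ independently of the local data at $S$. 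Your treatment of the local--global comparison at $q$ — twisting by the unramified $|\det|_q^{(1-n)/2}$ and using vanishing monodromy plus Frobenius semisimplicity to pass from the Weil--Deligne representation back to $\rho_{\ell,\iota}(\pi)\vert_{D_q}$ — also matches what the paper records in the remark immediately following the corollary.
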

\begin{proof}
Let $\pi$ be an automorphic representation as in Theorem \ref{chin} and $\mu_{\text{triv}}$ the trivial character of $\A^\times_\Q / \Q^\times$.  
Then $(\pi, \mu_{\text{triv}})$ is a RAESDC automorphic representation of $\GL_n(\A_\Q)$ and the compatible system $\mathcal{R}(\pi) = \{  \rho_{\ell, \iota} (\pi) \}_\ell$ associated to $(\pi,  \mu_{\text{triv}})$ as in Theorem \ref{raesdc} is totally odd polarizable for the compatible system of characters $ \{ \chi_\ell^{1-n} \rho_{\ell, \iota} (\mu_{\text{triv}}) \}_\ell = \{\chi_\ell^{1-n} \}_\ell$.

Note that $\mathcal{R}(\pi)$ is Hodge-Tate regular and for every $\ell \neq q$, $\rho_{\ell, \iota}(\pi)$ is crystalline. 
Let $a \in \Z$ be the smallest Hodge-Tate weight and let $k$ be the greatest difference between any two Hodge-Tate numbers. By Fontaine-Laffaille theory, we have that for every prime $\ell$ such that $\ell > k+2$ and $\ell \neq q$, the representation $\overline{\chi}_\ell^a \otimes \overline{\rho}_{\ell ,\iota}(\pi)$ is regular and the tame inertia weights of this representation are bounded by $k$.  

Finally, taking $p$ and $q$ as in Lemma \ref{prim} with $N=1$ and by part $ii)$ of Theorem \ref{chin}, we have that $\rho_{\ell,\iota}(\pi)$ is maximally induced of $S$-type or $O$-type at $q$ of order $p$ for all prime $\ell$ different from $2$, $p$ and $q$. 
\end{proof}

\begin{rema} 
From the local global compatibility, we have that the Frobenius semisimplification of $\rho_{\ell, \iota} (\pi) \vert _{G_{\Q_p}}$ is isomorphic to $\rec_p(\pi_p) \otimes \vert \; \vert ^{(1-n)/2}$ for all $p \neq \ell$. By the self-duality of $\pi$ and Chevotarev's Density Theorem, we have that $\rho_{\ell, \iota}^{\vee} (\pi) = \rho_{\ell, \iota}(\pi) \vert \; \vert ^{n-1}$  and then $\rho_{\ell, \iota}(\pi)$ acts by either orthogonal or symplectic similitudes on $\overline{\Q}^\times_\ell$ with similitude factor $\vert \; \vert ^{n-1}$. Although it is possible for an irreducible representation to act by both orthogonal and symplectic similitudes, this is not possible if the factor of similitude are the same. 
Since $\rho_q$ (resp. $\rho'_q$) as above is irreducible, then WD$(\rho_q)$ (resp. WD($\rho'_q$)) and WD$(\rho_{\ell, \iota})(\pi) \vert _{G_{\Q_q}}$ are already Frobenius semisimple.  
Thus, $\mbox{WD}(\rho_{\ell, \iota}(\pi) \vert _{G_{\Q_q}}) \simeq \mbox{WD}(\rho_q) \otimes \vert \;  \vert^{(1-n)/2}$ (resp. $\mbox{WD}(\rho_{\ell, \iota}(\pi) \vert _{G_{\Q_q}}) \simeq \mbox{WD}(\rho'_q) \otimes \vert \;  \vert^{(1-n)/2}$), which implies that $\rho_{\ell, \iota}(\pi) \vert _{G_{\Q_q}} \simeq \rho_q \otimes \vert \;  \vert^{(1-n)/2}$ (resp. $\rho_{\ell, \iota}(\pi) \vert _{G_{\Q_q}} \simeq \rho'_q \otimes \vert \;  \vert^{(1-n)/2}$).
Finally, as $\rho_q$ (resp. $\rho_q'$)  is an irreducible symplectic (resp. orthogonal) representation, it follows that $\rho_{\ell, \iota}(\pi)$ is irreducible and symplectic (resp. orthogonal) with similitude factor $\vert \; \vert^{n-1}$. Therefore the image of $\rho_{\ell, \iota}(\pi)$  may be conjugate in $\GSp_n(\overline{\Q}_\ell)$ (resp. $\GO_n(\overline{\Q}_\ell)$) to a subgroup of $\GSp_n(\overline{\Z}_\ell)$ (resp. $\GO_n(\overline{\Z}_\ell)$).
\end{rema}


\section{Study of the images II (non geometric cases)}\label{sec:8}

In this section we will give some refinements of Theorem \ref{rt} for some low-dimensional symplectic and orthogonal groups. More precisely we will prove the following result.

\begin{theo}\label{rib}
Let $\mathcal{R} = \{ \rho_\ell \}_\ell$ be a totally odd polarizable compatible system of Galois representations $\rho_\ell: G_\Q \rightarrow \GL_n(\overline{\Q}_\ell)$ as in Theorem \ref{rt}. Then for almost all primes $\ell$ we have that:
\begin{enumerate}
\item If the image of $\rho_\ell$ is contained in $\GSp_n(\overline{\Q}_{\ell})$ and $6 \leq n \leq 12$, then the image of $\overline{\rho}_\ell^{\proj}$ is equal to $\PSp_n(\F_{\ell^s})$ or $\PGSp_n(\F_{\ell^s})$ for some integer $s > 0$.
\item If the image of $\rho_\ell$ is contained in $\GO_{12}(\overline{\Q}_{\ell})$, then the image of $\overline{\rho}_\ell^{\proj}$ is equal to $\POM^+_{12}(\F_{\ell^s})$, $\PSO^+_{12}(\F_{\ell^s})$, $\PO_{12}^+(\F_{\ell^s})$ or $\PGO^+_{12}(\F_{\ell^s})$ for some  integer $s>0$.
\end{enumerate}
\end{theo}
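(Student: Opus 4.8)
The plan is to leverage Theorem \ref{rt}, which already guarantees that for all $\ell \neq p,q$ the image $\mathrm{P}G_\ell$ of $\overline{\rho}_\ell^{\proj}$ is almost simple, and then to pin down \emph{which} almost simple group occurs by exploiting that $\mathrm{P}G_\ell$ contains the image $\Gamma_q$ of $\overline{\rho}_q$ (since, as in the reducible/imprimitive analysis, the local-at-$q$ subgroup survives) together with the low-dimensional bound $6 \le n \le 12$. First I would recall the conclusion of $\S$\ref{conc}: either $\mathrm{P}G_\ell$ contains $\PSp_n(\F_{\ell^r})$ (resp.\ $\POM^+_n(\F_{\ell^r})$) and sits inside $\PGSp_n(\F_{\ell^r})$ (resp.\ $\PGO^+_n(\F_{\ell^r})$) — in which case we are done, with $s = r$ and $\F_{\ell^s} = \F_{\ell^r}$ — or $\mathrm{P}G_\ell$ is one of the almost simple groups appearing in case vii) of Aschbacher's classification that is a \emph{proper} such subgroup, i.e.\ an $\mathcal{S}$-subgroup (in the sense of the $\mathcal{S}$-type maximal subgroups of \cite{KL90}, \cite{BHR13}). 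The strategy is then to rule out, one by one, every $\mathcal{S}$-type maximal subgroup of $\PGSp_n(\F_{\ell^r})$ for $n \in \{6,8,10,12\}$ and of $\PGO^+_{12}(\F_{\ell^r})$ that could contain $\mathrm{P}G_\ell$.

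The key tool for the elimination is that $\mathrm{P}G_\ell \supseteq \overline{\Gamma}_q$, where $\Gamma_q$ is a non-abelian group of order divisible by $np$ (a non-split extension of $\Z/n\Z$ by $\Z/2p\Z$ or $\Z/p\Z$), and $p$ is a prime chosen in Lemma \ref{prim} to be large — larger than $M > 17, n, kn!+1$ and all primes dividing $2\prod_{i=1}^m(2^{2i}-1)$ when $n=2^m$. Consequently $p$ divides $|\mathrm{P}G_\ell|$, so any candidate subgroup must have order divisible by a prime $p$ that we may take arbitrarily large (over the infinitely many choices) and in particular larger than the order of any fixed finite group. This immediately kills the "generic-position" $\mathcal{S}$-subgroups whose socle is a fixed finite simple group $S_6$, $A_7$, $J_2$, etc.\ (these appear in the $\mathcal{S}$-tables of \cite{BHR13} for dimensions up to $12$), since those have bounded order. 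For the families of $\mathcal{S}$-subgroups whose socle is itself of Lie type in the defining characteristic $\ell$ (e.g.\ $\PSL_2(\F_{\ell^t})$, $\mathrm{P}\Omega_7$, $G_2$, etc., embedded via a small representation), I would argue using the order of $\overline{\rho}_\ell(I_q)$, which is exactly $p$, together with the Hodge--Tate / tame-inertia-weight regularity: a cyclic subgroup of order $p$ with $p$ larger than all the relevant torus parameters forces $\mathrm{P}G_\ell$ to contain a maximal torus of the full symplectic/orthogonal group, or one invokes the classification of subgroups of $\mathrm{PGL}$ containing a regular semisimple element of large order. In practice the cleanest route here — and the one I would follow — is to cite the existing case-by-case analysis in \cite{ADSW14} and \cite{DW11} for $n \le 8$ and carry out the extra work for $n = 10, 12$ by hand using the tables in \cite{BHR13}, checking that no $\mathcal{S}$-maximal subgroup has order divisible by our large prime $p$ once $I_q$ contributes a $p$-cycle of the right form.

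After the $\mathcal{S}$-subgroups are excluded, we conclude $\mathrm{P}G_\ell \supseteq \PSp_n(\F_{\ell^r})$ or $\POM^+_n(\F_{\ell^r})$, so it remains to identify the subfield $\F_{\ell^s}$ and to see which of the four groups between $\POM^+$ and $\PGO^+$ actually occurs. For the subfield question: the image lies in $\GSp_n(\overline{\F}_\ell)$ (resp.\ $\GO_n(\overline{\F}_\ell)$), and since it contains $\overline{\Gamma}_q$ with $\overline{\rho}_q$ defined over the unramified extension of degree $n$, standard subfield arguments (the image is generated by $\overline{\rho}_\ell(\Frob_v)$ for various $v$, whose traces lie in a fixed finite subfield) show the image is conjugate into $\GSp_n(\F_{\ell^s})$ (resp.\ $\GO^+_n(\F_{\ell^s})$) for a well-defined $s$; this is exactly the argument used in the cited papers and I would replicate it. For which subgroup occurs in the orthogonal $n=12$ case, I would use the indices recorded at the end of Section 1 — $[\PGO^\pm:\PO^\pm]=2$, $[\PO^\pm:\PSO^\pm]=2$, $[\PSO^\pm:\POM^\pm]=a_\pm$ with $a_+=2$ iff $n(\ell^r-1)/4$ is even — together with the facts that $\det \overline{\rho}_\ell$ and the spinor norm are controlled by the (trivial) central character and the polarization: the determinant being a fixed power of the cyclotomic character constrains whether the image lands in $\PSO^+$ or only in $\PO^+$, and the spinor norm of the generators (computable from $\overline{\rho}_q$, which is an induced representation with explicit Frobenius eigenvalues) decides whether one descends all the way to $\POM^+_{12}$. \textbf{The main obstacle} I anticipate is the exhaustive verification for $n=10$ and $n=12$ that every $\mathcal{S}$-type maximal subgroup is excluded: unlike the geometric cases i)--vi), which were handled uniformly in $\S$\ref{ima1}, the $\mathcal{S}$-subgroups have no uniform description and one must walk through the (finite but nontrivial) list in \cite{BHR13} dimension by dimension, in each case confirming that the large prime $p \mid |\mathrm{P}G_\ell|$ coming from inertia at $q$ cannot be accommodated — the defining-characteristic families being the subtlest since their order is not bounded and one genuinely needs the structure of $\overline{\rho}_\ell(I_q)$ as a single $p$-element acting with $n$ distinct eigenvalue-orbits.
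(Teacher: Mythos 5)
Your overall strategy is the right one and matches the paper's in broad outline: invoke Theorem \ref{rt} to land in the almost simple (class $\mathcal{S}$) case, then use the local subgroup at $q$ to eliminate every $\mathcal{S}$-type candidate. For the cross-characteristic subgroups (what the paper calls $\mathcal{S}_1$) your order-divisibility argument is exactly what's needed: the groups in the relevant tables of \cite{BHR13} (Propositions 6.3.17, 6.3.19, 6.3.21, 6.3.23) have orders supported on small primes, and since $p > 17$ divides $|\mathrm{P}G_\ell|$ these are all impossible. Likewise, for $n=12$ orthogonal the paper observes that $n\equiv 0\pmod 4$ forces plus type, that $\mathcal{S}_2$ is empty (Table 8.83), and that $\mathcal{S}_1$ is killed by the $p\mid|\mathrm{P}G_\ell|$ argument — no determinant/spinor-norm analysis is carried out or needed, since the theorem only asserts the image is one of the four groups in the chain $\POM^+_{12}\le\cdots\le\PGO^+_{12}$, not which one. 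That part of your plan is work you don't have to do.

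The genuine gap is in how you propose to dispatch the defining-characteristic class $\mathcal{S}_2$ in the symplectic case. You correctly flag this as the subtle point (these families have unbounded order, so $p\mid|\mathrm{P}G_\ell|$ says nothing), and you gesture toward "maximal torus" or "regular semisimple element" arguments, and toward a dimension-by-dimension walk through \cite{BHR13} citing \cite{ADSW14} and \cite{DW11}. But the paper's actual argument is both sharper and uniform across $6\le n\le 12$: by Tables 8.29, 8.49, 8.65, 8.81 of \cite{BHR13}, the \emph{only} defining-characteristic $\mathcal{S}$-family in $\GSp_n(\F_{\ell^r})$ in this range is the image of $\GL_2$ under the $(n-1)$-st symmetric power representation. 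Once one knows this, the elimination is a short application of Dickson's classification of maximal subgroups of $\PGL_2(\F_{\ell^r})$ to the local projective image $\mathrm{P}G_q$ of $\Ind^{G_{\Q_q}}_{G_{\Q_{q^n}}}(\overline{\chi}_q)$, which is contained in $\mathrm{P}G_\ell$: $\mathrm{P}G_q$ is non-abelian (not triangular, since $\ell\nmid 2p$), has all elements of order $\le p$ (rules out dihedral of order $np$), has $p>7$ (rules out $A_4$, $S_4$, $A_5$), and has a normal subgroup of order $p$ of index $n\ge 6>2$, incompatible with being inside $\PGL_2(\F_{\ell^s})$ where $\PSL_2(\F_{\ell^s})$ is simple of index $2$. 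Without this concrete observation — a single Lie-type family, attacked via Dickson on the local image rather than via generic torus/regular-element machinery — your proof of part (i) is not complete; you have correctly isolated where the difficulty lies but not how to resolve it.
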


In order to prove this theorem we need a more precise description of the maximal subgroups which are not of geometric type.

\begin{lemm} 
Let $G$ be a subgroup of $\GSp_n(\F_{\ell^r})$ (resp. $\GO^\pm_n(\F_{\ell^r})$) such that it is not contained in a maximal subgroup lying in cases i)-vi) of Aschbacher's classification, then one of the following holds:
\begin{enumerate}
\item $G$ is contained in a maximal subgroup of class $\mathcal{S}$ of $\GSp_n(\F_{\ell^s})$ (resp. $\GO^\pm(\F_{\ell^s})$) for some integer $s>0$ dividing $r$, or
\item $\emph{P}G$ is conjugate to $\PSp_n(\F_{\ell^s})$ or $\PGSp_n(\F_{\ell^s})$ (resp. to $\POM^\pm_n(\F_{\ell^s})$, $\PSO^\pm_n(\F_{\ell^s})$, $\PO_n^\pm(\F_{\ell^s})$ or $\PGO^\pm(\F_{\ell^s})$) for some integer $s>0$ dividing $r$.
\end{enumerate} 
\end{lemm}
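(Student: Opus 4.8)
The plan is to feed $G$ into the classification recalled above and then dissect the ``almost simple'' alternative. The Aschbacher bounds being in force ($n\ge 6$ in the symplectic case, $n\ge 10$ in the orthogonal case), and $\ell$ being odd so that there are no classical subgroups of type $\mathcal{C}_8$, the hypothesis that $G$ lies in none of cases i)--vi) leaves exactly two possibilities: either $G\supseteq\Sp_n(\F_{\ell^r})$ (resp.\ $G\supseteq\Omega^\pm_n(\F_{\ell^r})$), or $\mathrm{P}G$ is almost simple. The first possibility is disposed of at once: then $\PSp_n(\F_{\ell^r})\le\mathrm{P}G\le\PGSp_n(\F_{\ell^r})$ (resp.\ $\POM^\pm_n(\F_{\ell^r})\le\mathrm{P}G\le\PGO^\pm_n(\F_{\ell^r})$), and the index tower recalled above ($[\PGSp_n:\PSp_n]=2$, resp.\ $[\PGO^\pm_n:\PO^\pm_n]=[\PO^\pm_n:\PSO^\pm_n]=2$ and $[\PSO^\pm_n:\POM^\pm_n]=a_\pm$) shows that the only groups lying between these bounds are exactly those in conclusion (ii), with $s=r$.

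So the substance is the case $\mathrm{P}G$ almost simple, with socle $S$; here I may also assume $G\not\supseteq\Sp_n(\F_{\ell^r})$ (resp.\ $\Omega^\pm_n(\F_{\ell^r})$), since otherwise the previous paragraph applies. I would show $G$ is of class $\mathcal{S}$. Conditions i) and iii) of the definition are, respectively, the hypothesis and the assumption just made, so it remains to prove that $G^\infty$ acts absolutely irreducibly, which I would do by the Clifford-theoretic argument underlying Aschbacher's theorem. First, $G^\infty$ is perfect and surjects onto $(\mathrm{P}G)^\infty$, which equals $S$ because $\mathrm{Out}(S)$ is solvable (Schreier's conjecture) and $S$ is perfect; hence $G^\infty\trianglelefteq G$ is nontrivial and non-scalar. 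As $G$ is irreducible (case i) excluded), Clifford's theorem (valid in all characteristics) writes $V\vert_{G^\infty}$ as a direct sum of $G$-conjugate irreducible $G^\infty$-modules; if these fall into more than one isotypic type, $G$ permutes the corresponding isotypic components, which have equal dimension, and so $G$ lies in case ii) --- excluded --- whence $V\vert_{G^\infty}\cong e\cdot W$ for a single $G^\infty$-module $W$. Since $G^\infty$ is perfect and nontrivial it cannot act by scalars, so $\dim W\ge 2$; and $e\ge 2$ is impossible, as it would exhibit $V$ as a $G$-stable tensor product $W\otimes U$ with $\dim W,\dim U\ge 2$, putting $G$ in case iv); hence $e=1$ and $G^\infty$ is irreducible. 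Finally $\End_{G^\infty}(V)$ is a field containing $\F_{\ell^r}$; were it strictly larger, $V$ would carry a proper $\F_{\ell^r}$-subfield structure preserved by $G$, i.e.\ $G$ would lie in case iii) --- excluded; so $\End_{G^\infty}(V)=\F_{\ell^r}$ and $G^\infty$ acts absolutely irreducibly. Thus $G$ is of class $\mathcal{S}$, which is conclusion (i).

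Finally I would record the bookkeeping linking this to conclusion (ii): if, in the situation just treated, $G^\infty$ is in addition a classical group acting in its natural module, say $G^\infty\cong\Sp_n(\F_{\ell^s})$ (resp.\ $\Omega^\pm_n(\F_{\ell^s})$) for some $s\mid r$, then $G\supseteq G^\infty$ gives $\PSp_n(\F_{\ell^s})\le\mathrm{P}G$, while $G$ normalises $G^\infty$ and conjugation by a \emph{linear} element of $\GSp_n(\F_{\ell^r})$ cannot induce a field automorphism of $\F_{\ell^s}$ on $\Sp_n(\F_{\ell^s})$ (its natural module is not isomorphic to its Frobenius twist over $\overline{\F}_\ell$), so $\mathrm{P}G\le\PGSp_n(\F_{\ell^s})$; the same index tower as before then pins $\mathrm{P}G$ to one of the groups in conclusion (ii), now with $s$ possibly a proper divisor of $r$ (and in the orthogonal case the type of the subfield group need not match that of the ambient group, which is why both signs appear). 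The step I expect to be the real obstacle is the absolute irreducibility of $G^\infty$: one must translate each of the three ways $V\vert_{G^\infty}$ can fail to be absolutely irreducible into membership of $G$ in one of $\mathcal{C}_1$--$\mathcal{C}_4$ --- this is exactly where the excluded cases i)--iv) get consumed --- and one must invoke, as in the Remark above, that for $\ell$ odd the weak notion of class $\mathcal{S}$ used here coincides with the classical one, so that conclusions (i) and (ii) genuinely exhaust the ``almost simple'' possibility.
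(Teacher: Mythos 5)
Your proof is correct and reaches the right conclusion, but it follows a genuinely different and more self-contained route than the paper. The paper disposes of the lemma in a few sentences: it splits on whether $G\supseteq\Sp_n$ (resp.\ $\Omega^\pm_n$), and in the remaining case simply cites the main theorem of Kleidman--Liebeck to place $G$ in $\mathcal{C}_5\cup\mathcal{S}$ (classical sense), then invokes the preceding Remark to pass from the classical notion of $\mathcal{S}$ to the weaker one used in the paper. You instead re-derive the relevant fragment of Aschbacher's theorem from scratch: starting from ``$\mathrm{P}G$ almost simple, $G$ excluded from $\mathcal{C}_1$--$\mathcal{C}_4$, $G\not\supseteq\Sp_n$,'' your Clifford-theoretic analysis ($G^\infty$ surjects onto the socle, homogeneity or $\mathcal{C}_2$, $e=1$ or $\mathcal{C}_4$, trivial endomorphism field or $\mathcal{C}_3$) extracts absolute irreducibility of $G^\infty$ and hence the three conditions of the weak $\mathcal{S}$ definition directly. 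This buys transparency and independence from the KL90 citation at the cost of length, and it has the pleasant side effect of showing that $\mathcal{C}_5$-type groups land in conclusion (i) as well as (ii), whereas the paper routes them exclusively through (ii).

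Two small defects in exposition rather than substance. First, your closing sentence claiming one must invoke the Remark to make (i) and (ii) exhaust the almost-simple alternative is backwards: since you have established the weak $\mathcal{S}$ conditions directly, the Remark (classical $\mathcal{S}$ $\Leftrightarrow$ weak $\mathcal{S}$ for $\ell$ odd) plays no role in this lemma; its real purpose is downstream, in Theorem 7.1, where the BHR/KL tables (stated for classical $\mathcal{S}$) are applied to a group known only to be weak $\mathcal{S}$. Second, when you rule out $e\ge 2$ you assert this puts $G$ in case iv), but the tensor factorization $V\cong W\otimes U$ over $\F_{\ell^r}$ presupposes $\End_{G^\infty}(W)=\F_{\ell^r}$, which you verify only afterwards; if the endomorphism field were strictly larger the obstruction would instead sit in $\mathcal{C}_3$. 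Both cases are excluded by hypothesis, so the contradiction survives, but the cleaner order is to pin down the endomorphism field before the multiplicity, as in the standard presentations of Aschbacher's theorem.
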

\begin{proof} If $G$ contains $\Sp_n(\F_{\ell^r})$ (resp. $\Omega^\pm_n(\F_{\ell^r})$), then $G$ lies in case ii).
On the other hand, if $G$ does not contain $\Sp_n(\F_{\ell^r})$ (resp. $\Omega_n^\pm(\F_{\ell^r})$), it is contained in a maximal subgroup lying in  cases vii) and viii) of Aschbscher's classification. 

If we assume that $G$ is contained in a maximal subgroup in case viii) of Aschbacher's classification, then $G$ lies in case i). 

Now, if we assume that $G$ is a subgroup of a maximal subgroup in case vii) of Aschbacher's classification, then there exists a minimal integer $s>0$, dividing $r$, such that $G$ is contained in $\GSp_n(\F_{\ell^s})$ (resp. in $\GO^\pm(\F_{\ell^s})$). See $\S 2.2.5$ of \cite{BHR13}. According to Aschbacher's classification we have the following three possibilities for $G$:
\begin{itemize}
\item $G$ is contained in a maximal subgroup of $\GSp_n(\F_{\ell^s})$ (resp. of $\GO^\pm_n(\F_{\ell^s})$) of geometric type.
This possibility can be excluded by applying the same arguments as in Section \ref{sec:6}.
\item $G$ is contained in a maximal subgroup  of class $\mathcal{S}$ of $\GSp_n(\F_{\ell^s})$ (resp. of $\GO^\pm_n(\F_{\ell^s})$). So $G$ lies in case i). 
\item $G$ contains $\Sp_n(\F_{\ell^s})$ (resp. $\Omega^\pm_n(\F_{\ell^s})$), so $G$ lies in ii).
\end{itemize}
\end{proof}

Then to prove Theorem \ref{rib} we just need to show that the image of $\overline{\rho}_\ell$ is not contained in a maximal subgroup of class $\mathcal{S}$ of $\GSp_n(\F_{\ell^s})$ (resp. of $\GO^+_n(\F_{\ell^s})$) for some $s$ dividing $r$.
According to Chapter 4 and 5 of $\cite{BHR13}$, at least in dimension smaller than or equal to 12, the groups of class $\mathcal{S}$ are divided in two classes as follows. We say that a group $G$ of class $\mathcal{S}$ lies in the \emph{class of defining characteristic}, denoted by $\mathcal{S}_2$, if $G^\infty$ is isomorphic to a group of Lie type in characteristic $\ell$, and  $G$ lies in the \emph{class of cross characteristic}, denoted by $\mathcal{S}_1$, otherwise.  For a fixed dimension the set of orders of the cross characteristic groups is bounded above independently of $\ell$. In contrast, the groups in defining characteristic have unbounded order as $\ell$ varies.

Now we are ready to give the proof of Theorem \ref{rib}, which will be given by considering the following two cases:


\subsection{Symplectic case}\label{sec:81}

Throughout this section we will assume that $\ell \geq 7$. Suppose that $G_\ell$ is a subgroup of a maximal subgroup lying in $\mathcal{S}_1$. Then according to Propositions 6.3.17, 6.3.19, 6.3.21 and 6.3.23 of \cite{BHR13}, $\mbox{P}G_\ell$ must be contained in an extension of degree at most $2$ of one of the following groups (see \cite{KL90} and \cite{BHR13} for the notation):
\begin{itemize}
\item $\PSL_2(\F_7)$ (of order $2^4 \cdot 3 \cdot 7$), $\PSL_2(\F_7).2$,
\item $\PSL_2(\F_{11})$ (of order $2^3 \cdot 3 \cdot 5 \cdot 11$), $\PSL_2(\F_{11}).2$,
\item $\PSL_2(\F_{13})$ (of order $2^3 \cdot 3 \cdot 7 \cdot 13$), $\PSL_2(\F_{13}).2$,
\item $\PSL_2(\F_{17})$ (of order $2^4 \cdot 3^2 \cdot 17$),
\item $\PSL_2(\F_{25})$ (of order $2^4 \cdot 3 \cdot 5^2 \cdot 13$),
\item $\PSp_4(\F_5)$ (of order $2^7 \cdot 3^2 \cdot 5^4 \cdot 13$),
\item $\mbox{PSU}_3(\F_3)$ (of order $2^5 \cdot 3^3 \cdot 7$), $\mbox{PSU}_3(\F_3).2$,
\item $\mbox{PSU}_5(\F_2)$ (or order $2^{11} \cdot 3^5 \cdot 5 \cdot 11$), $\mbox{PSU}_5(\F_2).2$,
\item $\mbox{G}_2(\F_4)$ (of order $2^{13} \cdot 3^3 \cdot 5^2 \cdot 7 \cdot 13$), $\mbox{G}_2(\F_4).2$,
\item $J_2$ (of order $2^8 \cdot 3^3 \cdot 5^2 \cdot 7$),
\item $A_5$, $S_5$, $A_6$ or $A_6.2_2$.
\end{itemize}
But as we have chosen $p > 17$, we have that the image of $\overline{\rho}^{\text{proj}}$ cannot be contained in these subgroups.

On the other hand, let $\mathcal{G}$ be an algebraic group over $\Z$ admitting an absolutely irreducible symplectic representation of dimension $n$. Then we can consider the corresponding map $\sigma:\mathcal{G} \rightarrow \GSp_{n,\Z}$ and the subgroup $\sigma(\mathcal{G}(\F_{\ell^r}))$ of $\GSp_n(\F_{\ell^r})$.
There is a general philosophy which states that, for $\ell$ sufficiently large, all the maximal subgroups in class $\mathcal{S}_2$ should arise from this construction for suitable $\mathcal{G}$ and $\sigma$ (see Section 1 of \cite{Lar95} and \cite{Se86}). 

For example, if $\mathcal{G}=\SL_2$ and $n$ is an even positive integer greater than $2$, this group admits an absolutely irreducible symplectic representation of dimension $n$ given by the $(n-1)$-th symmetric power of $\SL_2$. Then it gives rise to an embedding $\SL_2 \hookrightarrow \Sp_n$. This representation extends to a representation $\GL_2 \rightarrow \GSp_n$ and the $\F_{\ell^r}$-points of the image of this representation gives rise to an element of $\mathcal{S}_2$ (see Proposition 5.3.6.i of \cite{BHR13}).
In fact, according to Tables $8.29$, $8.49$, $8.65$ and $8.81$ of loc. cit., this is the only kind of subgroups lying in the class of defining characteristic if $6 \leq n \leq 12$.

In order to deal with this case we will use Dickson's well known classification of maximal subgroups of $\PGL_2(\F_{\ell^r})$ which states that they can be either isomorphic to a group of upper triangular matrices, a dihedral group $D_{2d}$ (for some integer $d$ not divisible by $\ell$), $\PSL_2(\F_{\ell^s})$, $\PGL_2(\F_{\ell^s})$ (for some integer $s>0$ dividing $r$), $A_4$, $S_4$ or $A_5$.

Let $\mbox{P}G_q$ be the projective image of $\Ind^{G_{\Q_q}}_{G_{\Q_{q^n}}} (\overline{\chi}_q)$ which is contained in $\mbox{P}G_\ell$. If $\mbox{P}G_q$ is contained in a group of upper triangular matrices, it is contained in fact in the subset of diagonal matrices because $\ell$ and $2p$ are coprime. But we know that $\mbox{P}G_q$ is non-abelian, then it cannot be contained  in a group of upper triangular matrices. Moreover, $\mbox{P}G_q$ cannot be contained in $A_4$, $S_4$ or $A_5$ because we have chosen $p$ greater than $7$.

Now assume that $\mbox{P}G_q$ is contained in a dihedral group.  As any subgroup of a dihedral group is either cyclic or dihedral and as $\mbox{P}G_q$ is non-abelian, we can assume that it is in fact a dihedral group of order $np$. This implies that $\mbox{P}G_q$ contains an element of order $mp$ (with $m \in \N$ such that $n=2m$), but we know that the elements of $\mbox{P}G_q$ have order at most $p$. Then $\mbox{P}G_q$ cannot be contained in a dihedral group.
Therefore $\mbox{P}G_q$ should be isomorphic to $\PSL_2(\F_{\ell^s})$ or $\PGL_2(\F_{\ell^s})$ for some integer $s>0$. As we are assuming $\ell \geq 7$, $\PSL_2(\F_{\ell^s})$ is an index 2 simple subgroup of $\PGL_2( \F_{\ell^s})$. But $\mbox{P}G_q$ contains a normal subgroup of order $p$, thus of index greater than $2$ (because we are assuming $n \geqslant 6$). Therefore, we have shown that the image of $\overline{\rho}_\ell$ cannot be contained in a maximal subgroup of class $\mathcal{S}_2$.  Then the first part of Theorem \ref{rib} is proved. 


\subsection{Orthogonal case}\label{sec:82}

According to Remark \ref{remi} and the construction in Section \ref{sec:7}, the first case where we can apply our results to orthogonal groups is when $n$ is equal to $12$.  
In this case, as $n \equiv 0 \mod 4$, it follows from Section $1$ that the image $\mbox{P}G_\ell$ of $\overline{\rho}_\ell^{\text{proj}}$ lies in $\PGO^+(\F_{\ell^r})$. 

From Table 8.83 of \cite{BHR13} we have that $\mathcal{S}_2$ is empty. Then by Proposition 6.3.23 of loc. cit. $\mbox{P}G_\ell$ must be contained in an extension of degree $2^a$ (with $a$ at most $3$) of one of the following groups (see \cite{KL90} and \cite{BHR13} for the notation):
\begin{itemize}
\item $\PSL_2(\F_{11})$ (of order $2^2 \cdot 3 \cdot 5 \cdot 11$),
\item $\PSL_2(\F_{13})$ (of order $2^2 \cdot 3 \cdot 7 \cdot  13$),
\item $\PSL_3(\F_3)$ (of order $2^4 \cdot 3^3 \cdot 13$), $\PSL_3(\F_3).2$,
\item $M_{12}$ (of order $2^6 \cdot 3^3 \cdot 5 \cdot 11$), $M_{12}.2$ or $A_{13}$.
\end{itemize}
Then from the choice of $p$, we can conclude that the image of $\overline{\rho}^{\text{proj}}$ cannot be contained in these subgroups. Therefore the second part of Theorem \ref{rib} is proved.

Finally, from Corollary \ref{chidote} there exist compatible systems satisfying the conditions of Theorem  \ref{rib} then we have the following result.

\begin{coro}\label{galo}
At least one of the following orthogonal groups: $\POM^+_{12}(\F_{\ell^s})$, $\PSO^+_{12}(\F_{\ell^s})$, $\PO_{12}^+(\F_{\ell^s})$ and $\PGO^+_{12}(\F_{\ell^s})$ are Galois groups of $\Q$ for infinitely many primes $\ell$ and infinitely many integers $s>0$.
\end{coro}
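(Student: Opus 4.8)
The plan is to combine the two halves of Theorem \ref{rib} with the existence result Corollary \ref{chidote} and the classical dictionary between Galois representations over finite fields and Galois groups outlined in the introduction. First I would observe that Corollary \ref{chidote} produces, for each even $n$ in the admissible range, a totally odd polarizable compatible system $\mathcal{R} = \{\rho_\ell\}_\ell$ satisfying all the hypotheses of Theorem \ref{rt}, and hence also those of Theorem \ref{rib}; moreover, by construction $\rho_{\ell,\iota}(\pi)$ lands in $\GSp_n(\overline{\Q}_\ell)$ when $\pi_\infty$ is of symplectic type and in $\GO_n(\overline{\Q}_\ell)$ (in fact, as computed in section \ref{secons}, in $\GO_{12}^+$) when $\pi_\infty$ is of orthogonal type. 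Applying Theorem \ref{rib} to this system, for all but finitely many $\ell$ the image of $\overline{\rho}_\ell^{\proj}$ is one of $\PSp_n(\F_{\ell^s})$, $\PGSp_n(\F_{\ell^s})$ (symplectic case, $6 \le n \le 12$) or $\POM^+_{12}(\F_{\ell^s})$, $\PSO^+_{12}(\F_{\ell^s})$, $\PO^+_{12}(\F_{\ell^s})$, $\PGO^+_{12}(\F_{\ell^s})$ (orthogonal case, $n=12$), for some integer $s = s(\ell) > 0$.

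Next I would invoke the elementary argument from the introduction: since $\overline{\rho}_\ell^{\proj}$ is a continuous homomorphism $G_\Q \to \PGL_n(\F_{\ell^s})$ with open kernel, its image is realized as $\Gal(K/\Q)$ for the fixed field $K$ of that kernel. Hence each of the listed groups that actually appears as the image of $\overline{\rho}_\ell^{\proj}$ is thereby realized as a Galois group over $\Q$. To get \emph{each} group in the statement, and for \emph{infinitely many} pairs $(\ell, s)$, the key point is that $\ell$ ranges over all but finitely many primes, so there are infinitely many admissible $\ell$; it then remains to control which of the finitely many groups in the relevant family occurs for a given $\ell$. Here I would use the subgroup containment $\POM^\pm_n \leq \PSO^\pm_n \leq \PO^\pm_n \leq \PGO^\pm_n$ (and $\PSp_n \leq \PGSp_n$), together with the index computations recorded at the end of Section~1, to note that once the image \emph{contains} $\PSp_n(\F_{\ell^s})$ (resp.\ $\POM^+_{12}(\F_{\ell^s})$) it must equal one of the named overgroups; and I would argue that infinitely many $\ell$ realize each specific group in the chain — for the top group $\PGSp_n$ (resp.\ $\PGO^+_{12}$) this follows because the similitude character $\vert\ \vert^{n-1}$ composed through determinant is nontrivial mod $\ell$ for large $\ell$, forcing the image strictly larger than the isometry part; the smaller groups are picked out by the parity conditions on $n(\ell^s-1)/4$ and on the spinor norm, which hold for infinitely many $\ell$ by Chebotarev applied to the relevant cyclotomic/Kummer conditions.

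I expect the main obstacle to be precisely this last bookkeeping: Theorem \ref{rib} only asserts that the image lies \emph{somewhere} in the named list of $2$ (resp.\ $4$) groups, and extracting that each individual group in the list is attained for infinitely many $\ell$ requires pinning down the determinant (similitude factor) and spinor norm of $\overline{\rho}_\ell$ modulo $\ell$. Concretely, since $\vartheta_\ell = \chi_\ell^{1-n}$, the projective image is forced above $\PSp_n$ or $\POM^+_{12}$, and the precise overgroup is governed by the image of $\det\overline{\rho}_\ell$ and of the spinor norm map; one then checks, via the congruence conditions on $\ell$ relative to $p$, $q$ and small auxiliary moduli, that each of the finitely many possibilities recurs infinitely often. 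Everything else is routine: the statement for $n<6$ in the symplectic case is covered by the references cited in Remark \ref{remi} (namely \cite{DW11} and \cite{DZ16}) and is quoted there, so no new argument is needed for those dimensions.
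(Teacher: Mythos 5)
Your opening paragraph reproduces the paper's argument exactly: Corollary \ref{chidote} supplies compatible systems, Theorem \ref{rib} places the projective image in a short list, and the introductory remark that $\mbox{Im}(\overline{\rho}_\ell^{\proj}) \cong \Gal(K/\Q)$ completes the realization. That is the paper's entire proof, given in one sentence, and the cases $n < 6$ are indeed handled by the references in Remark \ref{remi}.

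Your later paragraphs go beyond the paper and correctly flag a genuine subtlety that the paper's one-line proof glosses over: Theorem \ref{rib} only places $\mbox{Im}(\overline{\rho}_\ell^{\proj})$ \emph{somewhere} in a short list, so concluding that each group on the list is attained for infinitely many $(\ell,s)$ requires additional control. However, your attempted fix contains an error. The multiplier of $\overline{\rho}_\ell$ is $\overline{\chi}_\ell^{1-n}$, whose image lies in $\F_\ell^\times$; since $n-1$ is odd this image is never contained in $(\F_\ell^\times)^2$, but whether a nonsquare of $\F_\ell^\times$ remains a nonsquare in $\F_{\ell^s}^\times$ depends on the parity of $s=s(\ell)$: for $s$ even, every element of $\F_\ell^\times$ becomes a square in $\F_{\ell^s}^\times$, so the projective image drops to the isometry quotient regardless of the multiplier. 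Thus the similitude character does \emph{not} force $\PGSp_n$ (resp.\ $\PGO_{12}^+$) for all large $\ell$ as you assert, and the Chebotarev argument you gesture at would have to control the degree $s(\ell)$ of the field where the reduced image actually lives, a quantity neither your proposal nor the paper supplies a handle on. (The orthogonal case compounds this with the determinant and spinor norm conditions.) So your first paragraph is a faithful rendering of the paper's proof, but the added machinery for pinning down each individual group in the list does not actually go through, and the gap you put your finger on is present in the paper as well.
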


\begin{rema}
As we saw through this section, our main tool to prove Theorem \ref{rib} was the classification of the maximal subgroups of class $\mathcal{S}$ of dimension at most 12.
Unfortunately, to the best of our knowledge, it is a feature of the subgroups in class $\mathcal{S}$ that they are not susceptible to a uniform description across all dimensions. So this is one the main obstacles to extending our results to higher dimensions. However, inspired by a recent work of Lombardo \cite[Section 9]{Lo17}, we believe that it is possible to extend Theorem \ref{rib} by applying some results from representation theory of algebraic and finite groups, or at least to be able to say more about the possible images of $\overline{\rho}_\ell$ in Theorem \ref{rt}.
\end{rema}



\begin{thebibliography}{80}

\addcontentsline{toc}{section}{Referencias}

\bibitem[AdR15]{AdR15} S. Arias-de-Reyna, \textit{Automorphic Galois representations and the inverse Galois problem}. Trends in Number Theory, 1-16, Contemp. Math., 649, Amer. Math. Soc., Providence, RI, 2015.

\bibitem[ADW16]{ADW14} S. Arias-de-Reina, L. Dieulefait and G. Wiese, \textit{Compatible systems of symplectic Galois representations and the inverse Galois problem II. Transvections and huge image}. Pacific J. Math. {\bf 281} (2016), no. 1, 1-16 (2016).

\bibitem[ADW17]{AdRDW} S. Arias-de-Reina, L. Dieulefait and G. Wiese, \textit{Compatible systems of symplectic Galois representations and the inverse Galois problem I. Images of projective representations}.  Trans. Amer. Math. Soc. {\bf 369} (2017), no. 2, 887-908.

\bibitem[ADSW15]{ADSW14} S. Arias-de-Reyna, L. Dieulefait, S.W. Shin and G. Wiese, \textit{Compatible systems of symplectic Galois representations and the inverse Galois problem III. Automorphic construction of compatible systems with suitable local properties.} Math. Ann. {\bf 361} (2015), no. 3, 909-925.

\bibitem[Art12]{Ar12} J. Arthur, \textit{The endoscopic classification of representations. Orthogonal and Symplectic Groups}. AMS Colloquium Publications, 61. AMS, Providence, RI, 2013. 

\bibitem[Art14]{Ar14} J. Arthur, \textit{L-functions and automorphic representations}. Proceedings of the International Congress of Mathematicians Seoul 2014. Vol. 1, 171-197, Kyung Moon Sa, Seoul, 2014. 

\bibitem[As84]{As84} M. Aschbacher, \textit{On the maximal subgroups of the finite classical groups}. Invent. Math. {\bf 76} (1984), 469-514.

\bibitem[BLGGT14]{BLGGT} T. Barnet-Lamb, T. Gee, D. Geraghty and R. Taylor, \textit{Potential automorphy and change of weight}. Ann. of Math. (2) {\bf 179} (2014), no. 2, 501-609. 

\bibitem[Bin]{Bin} J. Binder, \textit{Fields of rationality of automorphic representations: the case of unitary groups}. ArXiv:1605.09659v1.

\bibitem[BHR13]{BHR13} J.N. Bray, D.F. Holt and C.M. Roney-Douglas, \textit{The maximal subgroups of the low-dimensional finite classical groups}. With a foreword by Martin Liebeck. London Mathematical Society Lecture Note Series, 407. Cambridge University Press, Cambridge, 2013.

\bibitem[Car12]{Car12a} A. Caraiani, \textit{Local-global compatibility and the action of monodromy on nearby cycles}. Duke Math. J. {\bf 161} (2012), no. 12, 2311-2413.

\bibitem[Car14]{Car12b} A. Caraiani, \textit{Monodromy and local-global compatibility for $\ell=p$}. Algebra Number Theory {\bf 8} (2014), no. 7, 1597-1646.

\bibitem[CH13]{CH13} G. Chenevier and M. Harris, \textit{Construction of automorphic Galois representations II}. Camb. J. Math. {\bf 1} (2013), no. 1, 53-73.

\bibitem[DW11]{DW11} L. Dieulefait and G. Wiese, \textit{On modular forms and the inverse Galois problem.} Trans. Am. Math. Soc. {\bf 363} (2011), no. 9, 4569-4584.

\bibitem[DZ]{DZ16} L. Dieulefait and A. Zenteno, \textit{On the images of the Galois representations attached to generic automorphic representations of $\GSp(4)$}. Accepted in Ann. Scuola Norm. Sup. Pisa Cl. Sci. (5).

\bibitem[HT01]{HT01} M. Harris and R. Taylor, \textit{The geometry and cohomology of some simple Shimura varieties}.  With an appendix by Vladimir G. Berkovich. Annals of Mathematics Studies, 151. Princeton University Press, Princeton, NJ, 2001.

\bibitem[KLS08]{KLS} C. Khare, M. Larsen and G. Savin, \textit{Functoriality and the inverse Galois problem}. Compos. Math. {\bf 144} (2008), no. 3, 541-564.

\bibitem[KW09]{KW} C. Khare and J.P. Wintenberger, \textit{Serre's modularity conjecture (I)}. Invent. Math. {\bf 178} (2009), no. 3, 485-504.

\bibitem[KL90]{KL90} P. Kleidman and M. Liebeck, \textit{The subgroup structure of the finite classical groups}. London Mathematical Society Lecture Note Series, 129. Cambridge University Press, Cambridge, 1990.

\bibitem[Lar95]{Lar95} M. Larsen, \textit{Maximality of Galois actions for compatible systems}. Duke Math. J. {\bf 80} (1995), no. 3, 601-630.

\bibitem[Lo]{Lo17} D. Lombardo, \textit{Explicit open image theorems for abelian varieties with trivial endomorphism ring}.  ArXiv:1508.01293v2.

\bibitem[Mit14]{Mi14} H.H. Mitchel, \textit{The subgroups of the quaternary abelian linear group}. Trans. Amer. Math. Soc. {\bf 15} (1914), no. 4, 379-396.

\bibitem[MW17a]{MW17a} C. Moeglin and J.L. Waldspurger, \textit{Stabilisation de la formule des traces tordue}. Vol. 1. Progress in Math. 316. Basel: Birkh$\ddot{\mbox{a}}$user/Springer, 2017.

\bibitem[MW17b]{MW17b} C. Moeglin and J.L. Waldspurger, \textit{Stabilisation de la formule des traces tordue}. Vol. 2. Progress in Math. 317. Basel: Birkh$\ddot{\mbox{a}}$user/Springer, 2017.

\bibitem[Moy84]{moy} A. Moy, \textit{The irreducible orthogonal and symplectic Galois representations of a $p$-adic field (the tame case)}. J. Number Theory {\bf 10} (1984), 341-344. 


\bibitem[Se03]{Se86} J.P. Serre, \textit{R\'esum\'e des cours de 1985-1986}. in Oeuvres-Collected Papers. IV. 1985-1998. 33-37. Springer Collected Works in Mathematics. Springer, Heidelberg, 2003. 

\bibitem[Shi12]{Shi12} S.W.~Shin, \textit{Automorphic Plancherel density theorem}. Israel J. Math. {\bf 192} (2012), no. 1, 83-120.

\bibitem[Wie14]{wie} G.~Wiese, \textit{Applying modular Galois representations to the Inverse Galois Problem}. Langlands Correspondence and Constructive Galois Theory, Oberwolfach Reports {\bf 11} (2014), no. 1, 305-309.


\bibitem[Zyw]{Zy14} D. Zywina, \textit{The inverse Galois problem for orthogonal groups}. ArXiv:1409.1151v1.

\end{thebibliography}
\end{document}